\documentclass[10pt,reqno]{amsart}

\usepackage[english]{babel}
\usepackage{amsthm}
\usepackage{amsmath}
\usepackage{amssymb}
\usepackage{amstext}
\usepackage{latexsym}
\usepackage{enumitem}
\usepackage{mathrsfs}
\usepackage{stmaryrd}
\usepackage{comment}
\usepackage{hyperref}
\usepackage{url}
\usepackage[left=4cm, right=4cm,marginparwidth=3cm, marginparsep=0.25cm]{geometry}

\newtheorem{theorem}{Theorem}
\newtheorem{lemma}{Lemma}

\newtheorem{corollary}{Corollary}

\renewcommand{\ss}{\mathfrak{s}}
\renewcommand{\tt}{\mathfrak{t}}
\providecommand{\uu}{\mathfrak{u}}
\providecommand{\xx}{\mathfrak{x}}

\providecommand{\AAc}{\mathcal{A}}

\providecommand{\BBc}{\mathscr{B}}
\providecommand{\DD}{\mathsf{D}}
\providecommand{\FFc}{\mathscr{F}}
\providecommand{\GG}{G}
\providecommand{\NN}{\mathbb{N}}
\providecommand{\NNb}{\mathbb{N}}

\providecommand{\RR}{\mathbb{R}}
\providecommand{\ZZ}{\mathbb{Z}}
\providecommand{\ZZb}{\mathbb{Z}}

\providecommand{\diam}{{\rm diam}}
\providecommand\llb{\llbracket}
\providecommand\rrb{\rrbracket}

\hypersetup{
    pdftitle={Sums of dilates in ordered groups},
    pdfauthor={Alain Plagne and Salvatore Tringali},
    pdfmenubar=false,
    pdffitwindow=true,
    pdfstartview=FitH,
    colorlinks=true,
    linkcolor=red,
    citecolor=green,
    urlcolor=cyan
}

\hyphenation{per-mu-ta-tion}

\begin{document}
\title{The Davenport constant of a box}
\author{Alain Plagne}
\address{Centre de math\'ematiques Laurent Schwartz, \'Ecole polytechnique, 91128 Palaiseau cedex, France}
\email{plagne@math.polytechnique.fr}

\author{Salvatore Tringali}
\address{Science Program, Texas A\&M University at Qatar, Education City \\ PO Box 23874 Doha, Qatar}
\email{salvo.tringali@gmail.com}
\urladdr{http://www.math.polytechnique.fr/~tringali/}

\thanks{This research is supported by the ANR Project CAESAR No. ANR-12-BS01-0011}

\subjclass[2010]{Primary: 11B75; Secondary: 11B30, 11P70}
\keywords{Additive combinatorics, Davenport constant, inverse theorem, minimal zero-sum sequence}

\begin{abstract}
Given an additively written abelian group $G$ and a set $X\subseteq G$, we let $\BBc (X)$
denote the monoid of zero-sum sequences over $X$ and $\DD(X)$ the Davenport constant
of $\BBc (X)$, namely the supremum of the positive integers $n$ for which there exists
a sequence $x_1 \cdots x_n$ of $\BBc (X)$ such that $\sum_{i \in I} x_i \ne 0$ for each
non-empty proper subset $I$ of $\{1, \ldots, n\}$. In this paper, we mainly investigate the case
when $G$ is a power of $\ZZb$ and $X$ is a box (i.e., a product of intervals of $G$).
Some mixed sets (e.g., the product of a group by a box) are studied too, and some inverse results
are obtained.
\end{abstract}
\maketitle

\section{Introduction}

Let $G$ be an additively written abelian group. Given $X \subseteq G$, we denote by $\FFc(X)$
the free abelian monoid of $G$ over $X$ and write it multiplicatively. Therefore, the reader should
be warned that $x^a$ is meant in this article as the sequence where $x$ is repeated $a$ times;
there will be no risk of confusion. We use $\BBc (X)$ for the abelian submonoid of $\FFc(X)$ of {\em zero-sum sequences over $X$},
that is containing all the non-empty words $x_1 \cdots x_n$ such that
$x_i \in X$ for each index $i$ and $\sum_{i=1}^n x_i = 0$, cf. \cite[Definition 3.4.1]{Ge-HK06a}.
Note that the sequences considered here are unordered.

Let $\ss = x_1 \cdots x_n$ be a non-empty sequence of $\BBc(X)$. By abuse of notation, we shall say that
the $x_i$'s are elements of $\ss$ or, simply, are in $\ss$ (that is, we identify sequences and multisets).
We say that $\ss$ is \textit{minimal} if $\sum_{i \in I} x_i \ne 0$ for every non-empty proper subset $I$
of $\{1, \ldots, n\}$. We call $n$ the {\em length} of $\ss$, which we denote by $\|\ss\|$, and we use $\AAc(X)$ for the set of minimal zero-sum sequences of $\BBc(X)$; notice that $\AAc (X) = \AAc (G) \cap \BBc ( X)$.
For further notation and terminology, we refer the reader to \cite[Section 2]{GG06}.

For $G$ an abelian group, the study of $\BBc(G)$ and its combinatorial properties is part of
what is called zero-sum theory, a subfield of additive theory with applications to group theory, graph theory, Ramsey theory,
geometry and factorization theory, see the survey \cite{GG06} and references therein.
One of the earliest questions in this area, and maybe one of the
most important, is concerned with the \emph{Davenport constant},
named after the mathematician who, according to \cite{Ol}, popularized it during the 1960s, starting
from a problem of factorization in algebraic number theory, see for instance  \cite{GB} or \cite{Ge-HK06a};
notice however that this group invariant was already discussed in \cite{Ro}.
The Davenport constant has become the prototype of algebraic invariants
of combinatorial flavour. Since the 1960s, the theory of these invariants
has highly developed in several directions, see for instance the survey article
\cite{GG06} or \cite[Chapters 5, 6, and 7]{Ge-HK06a}.

Given a finite abelian group $G$, it turns out that any long enough sequence of elements
in it contains a zero-sum subsequence. More generally, the Davenport constant of an abelian group $G$,
denoted by $\DD (G)$, is defined as the smallest integer $n$ such that each sequence over $G$ of length
at least $n$ has a non-empty zero-sum subsequence.
Equivalently, $\DD (G)$ is the maximal length of a minimal zero-sum sequence over $G$, \emph{i.e.}
the maximal length of a sequence of elements of $G$ summing to $0$ and with no proper subsequence
summing to $0$. If $G$ is decomposed, as is always possible if $G \neq \{ 0 \}$, as a direct sum
of cyclic groups $G\cong C_{n_1} \oplus \cdots \oplus C_{n_r}$ with integers
$1< n_1 \mid \dots \mid n_r$ (here, $C_k$ denotes a cyclic group with $k$ elements,
$r$ is the rank of $G$ and $n_r = \exp G$, the exponent of $G$),
an immediate lower bound for the Davenport constant is
\begin{equation}
\label{lowb}
\DD (G) \geq 1+ \sum_{i=1}^r (n_i - 1);
\end{equation}
to see this, notice that the sequence containing, for each $i = 1, \ldots, r$, one generator of the cyclic
component $C_{n_i}$ repeated $n_i-1$ times,
has no non-empty zero-sum subsequence. It is known that for groups of rank at most two
and for $p$-groups (with $p$ a prime), \eqref{lowb} is in fact an equality, as
was obtained independently in \cite{vEB} and \cite{Ol,Ol2}. In particular, if $G$ is cyclic then
\begin{equation}
\label{cyclic}
\DD (G) = |G |,
\end{equation}
and this is characteristic of cyclic groups, as for instance follows immediately from \eqref{vebkmesh}.
For groups of rank at least four, equality is definitely not
the rule, see \cite{Baa, vEB, GS}. In the case of groups of rank three, it has been conjectured that equality holds again, but this conjecture is wide open, see \cite{GG06}, and
seemingly difficult. Concerning upper bounds, the best general result is the following:
\begin{equation}
\label{vebkmesh}
\DD (G) \leq  \left( 1 + \log \frac{|G|}{\exp G} \right) \exp G,
\end{equation}
which is proved in \cite{vEBK, Meshu}. We do not know really more than this in general: In spite of so much work related
to the Davenport constant over the years, its actual value has been determined only for a few additional
families of groups beyond the ones for which it was already known by the end of the 1960s.
The general impression is that, although it has a very simple definition,
computing the Davenport constant of an abelian group (of rank at least three) is a challenging problem.

Let it be as it may, it turns out that generalizing the question to a broader setting makes sense and can be useful.
In particular, for any subset $X$ of an abelian group $G$ we may define
its Davenport constant, which we denote by $\DD(X)$, as the largest integer $n$ for which
there exists a minimal zero-sum sequence in $\BBc (X)$ of length $n$; this variant was first introduced by van Emde Boas in \cite{vEB}, where it is however denote by $\mu(G, X)$.
It is trivial but worth remarking that in general, and contrarily
to the case where $X = G$, it can happen that $\DD(X)$ is finite and yet we can build arbitrarily long sequences with no non-empty proper zero-sum subsequence.
Also, it is immediate that $\DD( X) \leq \DD( G)$: this inequality is in general strict and it is well possible that $\DD( X)$ is finite while $\DD( G)$ is not.

The study of such a generalisation of the Davenport constant to subsets of abelian groups, is
of great interest for its applications to factorization theory, an area which is currently expanding from the classical setting of (mostly commutative) rings to the context of modules.
Indeed, if $H$ is a Krull monoid with class group $G$ and  if $X \subseteq G$ is the set of classes
containing prime divisors, then the Davenport constant $\DD (X)$ is a crucial invariant describing
the arithmetic of $H$, see \cite[Chapter 3.4]{Ge-HK06a} and \cite{GGSS}. It turns out that
the study of direct-sum decompositions in module theory gives rise to Krull monoids with class groups
which are precisely a power of the additive group $\ZZ$ of the integers. For this reason, Baeth and Geroldinger, in the final section of their recent paper \cite{BaethG}, ask specifically, as part of a larger research programme, to study
the Davenport constant of what we call a box, that is a product of intervals of integers.

The main goal of the present paper is, in fact, to derive bounds and exact formulas for $\DD(X)$
in the case when $X$ is a subset of a power of the additive group $\ZZ$ of the integers; in particular, we mostly investigate the case of $X$ being a box. Inverse results, describing the structure of the
sequences of maximal or almost maximal length, are also presented, along with hybrid
results involving the product of a group and a box.

\section{New results}

The first part of our study is concerned with the case of the integers; interesting results in this direction have been recently obtained by Sissokho in \cite{Sisso}.
As usual, we let the {\em diameter} of a set $X \subseteq \ZZ$ be given by
$$
\diam(X) = \sup_{x,y \in X} |x-y|
$$
and we denote, in all what follows, by $\chi$ the function defined, for all subsets
$S$ of $\ZZb$ containing both positive and negative elements, by the formula
$$
\chi (S) = \sup_{x,y \in S \text{ with } xy < 0}\quad \frac{|x|+|y|}{\gcd(x,y)}.
$$

Our first result can be then stated as follows.

\begin{theorem}
\label{th:main_1}
Let $X$ be a non-empty set of integers. Then,
\begin{enumerate}[label={\rm (\roman{*})}]
\item if $X \subseteq \NNb \setminus \{ 0 \}$ then $\DD(X)= 0$,
\item if $0 \in X \subseteq \NNb$ then $\DD(X)=1$,
\item if $X$ contains both positive and negative integers, then $\chi(X) \le \DD(X) \le  \diam(X)$.
\end{enumerate}
\end{theorem}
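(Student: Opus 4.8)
The plan is to prove the two inequalities separately: the lower bound $\chi(X)\le\DD(X)$ by an explicit two-element construction, and the upper bound $\DD(X)\le\diam(X)$ by a greedy reordering of a minimal sequence. Throughout I may assume $X$ is finite, since otherwise $\diam(X)=\infty$ and the upper bound is vacuous; in the finite case, because $X$ contains integers of both signs, $M:=\max X>0$ and $m:=\min X<0$ exist and $\diam(X)=M-m$.

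For the lower bound, fix $x,y\in X$ with $xy<0$, say $x>0>y$, and set $a=x$, $b=-y$, $d=\gcd(a,b)$, and $a=da'$, $b=db'$ with $\gcd(a',b')=1$. I would consider the sequence $x^{b'}y^{a'}\in\FFc(X)$. Its sum is $b'a-a'b=d(b'a'-a'b')=0$, so it lies in $\BBc(X)$, and it has length $a'+b'=(|x|+|y|)/\gcd(x,y)$. To check minimality, a sub-sum using $p$ copies of $x$ and $q$ copies of $y$ equals $d(pa'-qb')$, which vanishes iff $pa'=qb'$; since $\gcd(a',b')=1$ this forces $b'\mid p$, so $(p,q)\in\{(0,0),(b',a')\}$, i.e.\ the chosen subset is empty or improper. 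Hence $\DD(X)\ge a'+b'$ for every admissible pair, and taking the supremum over such pairs yields $\DD(X)\ge\chi(X)$.

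For the upper bound, let $\ss=x_1\cdots x_n\in\AAc(X)$; I may assume $n\ge 2$, in which case minimality forbids any $x_i=0$. The key observation is that, \emph{for any} ordering of the terms, the partial sums $s_0=0,s_1,\ldots,s_{n-1}$ are pairwise distinct, since an equality $s_i=s_j$ with $i<j\le n-1$ would exhibit a non-empty proper subset of $\{1,\ldots,n\}$ summing to $0$. I would then order the terms greedily: append a positive term whenever the current partial sum is $\le 0$ and a negative term whenever it is $>0$. The terms remaining at each step sum to $-s_k$ and none is zero, so a term of the required sign is always available (the only borderline case, $s_k=0$, occurs just at $k=0$, where both signs are present). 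A short induction then shows the greedy rule keeps every partial sum in the range $m<s_k\le M$, so $s_0,\ldots,s_{n-1}$ are $n$ distinct integers in $\{m+1,\ldots,M\}$, whence $n\le M-m=\diam(X)$.

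The degenerate cases $n\le 1$ and $\diam(X)=\infty$ being immediate, this completes the argument. The distinctness of the partial sums and the lower-bound construction are routine; I expect the main obstacle to be the upper bound, where one must check \emph{simultaneously} that the greedy procedure never gets stuck and that it confines the partial sums to an interval containing exactly $\diam(X)$ integers, so that the count of distinct prefix sums delivers precisely the bound $M-m$.
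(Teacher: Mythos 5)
Your proposal is correct and follows essentially the same route as the paper: the lower bound via the minimal zero-sum sequence $x^{|y|/\gcd(x,y)}\cdot y^{|x|/\gcd(x,y)}$, and the upper bound via a sign-alternating greedy reordering that confines the distinct partial sums to an interval of exactly $\diam(X)$ integers, which is precisely the paper's ``nyctalopic permutation'' argument. The only cosmetic difference is that you force the first term to be positive and count the prefix sums $s_0=0,\dots,s_{n-1}$ inside $\llb \min X+1,\max X\rrb$, whereas the paper starts from an arbitrary first term and uses a strictness refinement of its confinement lemma to reach the same count of $M+m$ integers.
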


Since there are sets $X$ for which $\chi(X) = \diam(X)$ (consider, e.g., the interval
$\llb -m,M \rrb$, where $m$ and $M$ are coprime positive integers, or apply Corollary \ref{cor:2a}),
point (iii) is in general sharp. We recall that, if $a$ and $b$ are real numbers, $a\leq b$, by $[a, b]$
we denote the interval $\{x \in \RR \text{ such that } a \le x \le b\}$, while we write $\llb a, b\rrb$ for the set $[a,b] \cap \ZZ$.

On the other hand, as will follow from our forthcoming results, there are sets $X$ such
that $\DD(X) <  \diam(X)$ (see, for instance, Corollary \ref{cor:2}).
Yet, we do not know of a single example for which $\chi(X) < \DD(X)$.
However, we have the following corollary (immediate from Theorem \ref{th:main_1})
in the case that $X$ is an interval around zero.

\begin{corollary}
\label{cor:2a}
Let $m$ and $M$ be positive integers, we have
$$
\frac{m+M}{\gcd(m,M)} \le \DD(\llb -m, M \rrb) \le m + M.
$$
In particular, if $m$ and $M$ are coprime, then
$$
\DD(\llb -m, M \rrb) = m + M.
$$
\end{corollary}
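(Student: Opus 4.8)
The plan is to deduce both inequalities directly from part (iii) of Theorem \ref{th:main_1}, applied to the set $X = \llb -m, M \rrb$. First I would observe that, since $m$ and $M$ are positive, $X$ contains the negative integer $-m$ together with the positive integer $M$, so $X$ has both positive and negative elements and the hypothesis of (iii) is satisfied. This at once yields the chain $\chi(X) \le \DD(X) \le \diam(X)$, and it remains only to evaluate the two outer quantities.

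For the upper bound I would compute $\diam(X)$: the extreme elements of the interval $\llb -m, M \rrb$ are $-m$ and $M$, so $\diam(X) = M - (-m) = m + M$, which gives $\DD(X) \le m + M$.

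For the lower bound, rather than determining $\chi(X)$ exactly, I would simply exhibit one admissible pair in the defining supremum. Taking $x = -m$ and $y = M$, we have $xy < 0$, $|x| + |y| = m + M$, and $\gcd(x,y) = \gcd(m,M)$, so the corresponding term of the supremum equals $\frac{m+M}{\gcd(m,M)}$; since $\chi(X)$ is taken over all such pairs, it follows that $\chi(X) \ge \frac{m+M}{\gcd(m,M)}$, and hence $\DD(X) \ge \frac{m+M}{\gcd(m,M)}$. For the concluding assertion I would note that setting $\gcd(m,M) = 1$ collapses the lower bound to $m + M$, matching the upper bound and forcing $\DD(X) = m + M$.

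Since all the genuine content is already packaged into Theorem \ref{th:main_1}, I do not expect any real obstacle: the argument reduces to two elementary verifications, namely the identity $\diam(\llb -m, M \rrb) = m + M$ and the fact that the single pair $(-m, M)$ realizes the value $\frac{m+M}{\gcd(m,M)}$ inside the supremum defining $\chi$ (using $\gcd(-m, M) = \gcd(m, M)$). If anything is delicate, it is only the conceptual point that the lower bound does not require computing $\chi(X)$ in full — a lower estimate from one well-chosen pair suffices, and the coprimality hypothesis is exactly what makes that single pair extremal.
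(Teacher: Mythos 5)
Your proposal is correct and is exactly the argument the paper intends: the corollary is stated there as ``immediate from Theorem \ref{th:main_1}'', namely applying part (iii) to $X = \llb -m, M \rrb$, computing $\diam(X) = m+M$, and bounding $\chi(X)$ from below by the single pair $(-m, M)$. No further comment is needed.
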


From this first corollary, one can immediately deduce the value of the Davenport
constant of a symmetrical interval around zero.

\begin{corollary}
\label{cor:2}
We have $\DD(\llb -1,1\rrb) = 2$ and, for any integer $m \geq 2$, $\DD(\llb -m,m \rrb) = 2m-1$.
\end{corollary}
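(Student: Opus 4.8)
The plan is to sandwich $\DD(\llb -m,m\rrb)$ between matching lower and upper bounds, using Theorem~\ref{th:main_1}(iii) as the backbone and recalling that $\diam(\llb -m,m\rrb)=2m$. For the lower bound I would evaluate $\chi$ at the pair $(m,-(m-1))$: since $\gcd(m,m-1)=1$ and $m\cdot(-(m-1))<0$, this gives $\chi(\llb -m,m\rrb)\ge\frac{m+(m-1)}{1}=2m-1$, whence $\DD(\llb -m,m\rrb)\ge 2m-1$ for every $m\ge1$ (for $m=1$ the only admissible pair is $\{1,-1\}$, giving $\chi=2$). Equivalently, one may apply Corollary~\ref{cor:2a} to the sub-interval $\llb -m,m-1\rrb$, whose endpoints are coprime, to get $\DD(\llb -m,m-1\rrb)=2m-1$, and then use the monotonicity $\DD(Y)\le\DD(Z)$ for $Y\subseteq Z$. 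When $m=1$ the lower bound $2$ already equals $\diam=2$, so Theorem~\ref{th:main_1}(iii) forces $\DD(\llb -1,1\rrb)=2$, settling the first claim.

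For $m\ge 2$ the bounds of Theorem~\ref{th:main_1}(iii) read $2m-1\le\DD(\llb -m,m\rrb)\le 2m$, so everything reduces to excluding length $2m$, i.e.\ to improving the generic upper bound $\diam=2m$ by one. Suppose, for contradiction, that $\ss$ is a minimal zero-sum sequence over $\llb -m,m\rrb$ with $\|\ss\|=2m$. Since $\|\ss\|\ge 2$ no term of $\ss$ equals $0$, and as the terms sum to $0$ there is at least one positive and one negative term. For any ordering $x_1\cdots x_{2m}$ of $\ss$ with partial sums $s_k:=\sum_{i\le k}x_i$, an equality $s_i=s_j$ with $i<j$ would exhibit the proper zero-sum subsequence $x_{i+1}\cdots x_j$; so minimality forces $s_0=0,s_1,\dots,s_{2m-1}$ to be $2m$ distinct integers with $s_1,\dots,s_{2m-1}\neq 0$.

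I would then fix the ordering greedily toward $0$: take $x_1$ positive, and for $k\ge1$ choose $x_{k+1}$ negative when $s_k>0$ and positive when $s_k<0$ (always possible, since the yet-unplaced terms sum to $-s_k$, of the required sign). A short induction shows that every $s_k$ lies in $\llb -m+1,m\rrb$, a set of exactly $2m$ integers; hence the $2m$ distinct partial sums realise \emph{every} value in $\llb -m+1,m\rrb$, in particular the extremes $m$ and $-m+1$. But $s_k=m$ can occur only at $k=1$ (from $s_0=0$), forcing $x_1=m$; and $s_k=-m+1$ can occur only by subtracting $m$ from a previous value $1$, forcing some term to equal $-m$. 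Thus both $+m$ and $-m$ occur in $\ss$, so $\{+m,-m\}$ is a zero-sum subsequence, and since $2<2m$ it is proper, contradicting minimality. Hence no minimal zero-sum sequence of length $2m$ exists and $\DD(\llb -m,m\rrb)=2m-1$.

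The main obstacle is exactly this last step, the passage from $2m$ to $2m-1$: the two bounds of Theorem~\ref{th:main_1}(iii) differ by one on the symmetric interval, and closing the gap demands the extremal analysis of partial sums rather than a direct appeal to $\chi$ or $\diam$. It is worth flagging why $m=1$ is genuinely exceptional: there the forced pair $\{+m,-m\}=\{1,-1\}$ is the \emph{entire} sequence of length $2m=2$, hence not proper, so no contradiction arises — which is precisely the reason $\DD(\llb -1,1\rrb)=2=2m$ rather than $2m-1$.
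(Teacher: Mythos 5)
Your proof is correct, but it closes the gap between $2m-1$ and $2m$ by a different route than the paper. The paper's own proof of Corollary~\ref{cor:2} is a two-line reduction: by Lemma~\ref{lem:trivial}~\ref{lem:trivial:item_iii} a minimal zero-sum sequence of length greater than $2$ cannot contain both $m$ and $-m$, so (this is the third assertion of Lemma~\ref{lem:monotony}) $\DD(\llb -m,m\rrb)=\DD(\llb -(m-1),m\rrb)$ for $m\ge 2$, and the latter equals $2m-1$ by Corollary~\ref{cor:2a} since $\gcd(m-1,m)=1$; the case $m=1$ is Corollary~\ref{cor:2a} directly. You instead keep the symmetric interval and run the extremal analysis from scratch: your greedy ordering is exactly the nyctalopic permutation of Lemmas~\ref{prolong} and~\ref{prop-nyctalopic}, your pigeonhole on the $2m$ distinct partial sums in $\llb -m+1,m\rrb$ is Lemma~\ref{lem:pigeon}~\ref{pig1}, and the extra step --- that a hypothetical length-$2m$ sequence must attain both extreme partial sums $m$ and $-m+1$, hence must contain both $m$ and $-m$, contradicting minimality --- is sound (I checked the two ``can occur only at\ldots'' claims; they follow from the sign rule together with $|x_k|\le m$). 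So both arguments ultimately rest on the same obstruction ($\{m,-m\}$ being a proper zero-sum subsequence); the paper uses it \emph{a priori} to shrink the alphabet to a coprime interval where Theorem~\ref{th:main_1}(iii) is already tight, while you use it \emph{a posteriori} as the terminal contradiction. The paper's factorization is shorter and reuses Corollary~\ref{cor:2a}; your version is self-contained and makes explicit why the extremal sequence is forced to look the way it does (a point the paper only reaches later, in Theorem~\ref{th:main_2}). Your remark on why $m=1$ escapes the contradiction is also accurate.
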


Moreover, the following asymptotic estimate holds.

\begin{corollary}
\label{cor:bertrand}
For positive integers $m$ and $M$, one has:
$$
\DD(\llb -m, M \rrb) = M + m + o\big( \min (m,M) \big)\qquad {\rm as}\ \min (m,M ) \to +\infty.
$$
\end{corollary}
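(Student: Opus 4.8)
The plan is to combine the elementary upper bound of Corollary~\ref{cor:2a} with a lower bound for $\chi(\llb -m,M\rrb)$ produced by a single, carefully chosen pair of coprime integers of opposite sign. Since replacing each element of $\llb -m,M\rrb$ by its opposite is a group automorphism, it preserves minimal zero-sum sequences, so $\DD(\llb -m,M\rrb)=\DD(\llb -M,m\rrb)$ and we may assume without loss of generality that $m\le M$, whence $\min(m,M)=m$. The upper bound $\DD(\llb -m,M\rrb)\le m+M$ is then immediate from Corollary~\ref{cor:2a} (or from point~(iii) of Theorem~\ref{th:main_1}), so the entire content of the statement is the matching lower bound $\DD(\llb -m,M\rrb)\ge m+M-o(m)$.

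By point~(iii) of Theorem~\ref{th:main_1} we have $\DD(\llb -m,M\rrb)\ge\chi(\llb -m,M\rrb)$, so it suffices to exhibit integers $x,y\in\llb -m,M\rrb$ with $xy<0$ and $(|x|+|y|)/\gcd(x,y)\ge m+M-o(m)$. The key idea is to force the greatest common divisor to equal $1$ by taking $x=-p$ with $p$ prime. Concretely, let $p$ be the largest prime not exceeding $m$; by the Prime Number Theorem the gaps between consecutive primes satisfy $p_{n+1}-p_n=o(p_n)$, so $m-p=o(m)$. Among the two consecutive integers $M$ and $M-1$ at most one is divisible by $p$ (as $p\ge 2$), so we may pick $y\in\{M,M-1\}$ with $p\nmid y$; then $M-y\le 1$ and, $p$ being prime, $\gcd(-p,y)=1$. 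Taking $x=-p$, which lies in $\llb -m,M\rrb$ since $p\le m$, we obtain
$$
\chi(\llb -m,M\rrb)\ \ge\ \frac{|-p|+|y|}{\gcd(-p,y)}\ =\ p+y\ \ge\ (m-o(m))+(M-1)\ =\ m+M-o(m).
$$

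Putting the two bounds together yields $m+M-o(m)\le\DD(\llb -m,M\rrb)\le m+M$, that is $\DD(\llb -m,M\rrb)=m+M+o(m)=m+M+o(\min(m,M))$, as claimed. The only non-elementary ingredient is the input from the Prime Number Theorem used to locate a prime within $o(m)$ of $m$, and this is also the step I expect to be the crux: the naive estimate $\chi\ge (m+M)/\gcd(m,M)$ coming from Corollary~\ref{cor:2a} is hopelessly weak, since it degenerates to the constant $2$ when $m=M$. It is likewise essential that the prime be taken near the \emph{smaller} endpoint $m$ rather than near $M$, for a prime close to $M$ would only control the error up to $o(M)$, which fails to be $o(\min(m,M))$ when $m$ is much smaller than $M$.
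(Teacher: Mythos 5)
Your proof is correct and follows essentially the same route as the paper: locate a prime $p$ near $m$, replace $M$ by $M$ or $M-1$ to force coprimality with $p$, and apply the lower bound $\chi \le \DD$ together with the trivial upper bound $m+M$. The only difference is that you invoke the Prime Number Theorem (giving $m-p=o(m)$) where the paper uses Hoheisel's theorem with the Baker--Harman--Pintz exponent $0.525$; both suffice for the statement as written, though only the latter yields the explicit power-of-$\min(m,M)$ error term mentioned in the remark following the corollary.
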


It will be transparent from the proof that, in Corollary \ref{cor:bertrand}, we can replace the error term
$o\big( \min (m,M) \big)$ with an explicit power (slightly larger than $1/2$) of $\min (m,M)$.

In fact, Corollary \ref{cor:2} appears (in an alternate but equivalent form) as part of the main theorem
in \cite{Sahs}, where the focus is mainly on pairs $(A,B)$ of non-empty subsets of
\textit{positive} integers, therein referred to as irreducible pairs, such that
$\sum_{a \in A} a = \sum_{b \in B} b$ and $\sum_{a \in A^\prime} a \ne \sum_{b \in B^\prime} b$
for any other pair $(A^\prime, B^\prime)$ of non-empty sets $A^\prime \subsetneq A$ and
$ B^\prime \subsetneq B$.

In the present paper, we shall adopt a strategy which looks quite different, both in spirit and
in practice. In particular, the proof of Corollary \ref{cor:2} comes very quickly as a consequence
of a technical lemma (essentially, Lemma \ref{lem:pigeon} \ref{pig1} of
Section \ref{sec:preliminary_lemmas}) of general interest and which we reuse to go a step further.

Having a direct theorem at hand, we are naturally led to its inverse counterpart. The first result
we obtain in this direction is concerned with the structure of minimal zero-sum sequences
of maximal length in an interval.

\begin{theorem}
\label{th:main_2}
Let $m$ and $M$ be positive integers and let $\ss = x_1 \cdots x_{m+M}$ be a sequence
of length $m+M$ in $\BBc(\llb -m, M \rrb)$.
Then, $\ss$ is minimal if and only if $\gcd(m,M) = 1$ and $\ss = M^{m} \cdot (-m)^M$.
\end{theorem}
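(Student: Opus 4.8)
The plan is to treat the two implications separately, the backward one being essentially a verification and the forward one carrying all the weight.

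For the ``if'' direction I would simply check that $\ss = M^{m}\cdot(-m)^{M}$ is a zero-sum sequence of length $m+M$, and that coprimality forces minimality: any non-empty zero-sum subsequence consists of $a$ copies of $M$ and $b$ copies of $-m$ with $aM=bm$ and $0\le a\le m$, $0\le b\le M$, so $\gcd(m,M)=1$ gives $M\mid b$ and hence $(a,b)\in\{(0,0),(m,M)\}$, i.e. the subsequence is empty or all of $\ss$. Conversely, if $d=\gcd(m,M)>1$ then $M^{m/d}\cdot(-m)^{M/d}$ is a proper zero-sum subsequence; this remark shows that, in the forward direction, coprimality will come for free once the shape of $\ss$ is known, so the real target there is only to identify the multiset of terms.

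For the ``only if'' direction, assume $\ss$ is minimal of length $m+M$. First, $0\notin\ss$, since a single zero term would be a proper zero-sum subsequence. I would then reuse the mechanism behind the bound $\DD(\llb -m,M\rrb)\le m+M$ of Theorem \ref{th:main_1}(iii) (Lemma \ref{lem:pigeon}): order the terms greedily, appending a positive term whenever the running sum is $\le 0$ and a negative one whenever it is $>0$, feasibility at each step following from the fact that the unused terms sum to the negative of the current partial sum. This keeps every partial sum $s_0,\dots,s_{m+M}$ inside $V:=\llb -(m-1),M\rrb$, and minimality makes $s_0,\dots,s_{m+M-1}$ pairwise distinct, since a repetition $s_i=s_j$ would exhibit a proper zero-sum block. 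As $|V|=m+M$, the length hypothesis forces $\{s_0,\dots,s_{m+M-1}\}=V$: the greedy walk realizes each value of $V$ exactly once and then returns to $0$. Counting the up-steps, which are in bijection with the positive terms and start exactly from the $m$ values of $V\cap\llb -(m-1),0\rrb$, shows that $\ss$ has precisely $m$ positive and $M$ negative terms.

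The heart of the matter is to upgrade this space-filling property to the rigid conclusion that every positive term equals $M$ and every negative term equals $-m$. Writing $\sigma$ for the successor map of the greedy walk, which is a single $(m+M)$-cycle on $V$, I would first pin down its behaviour at the extremes: the value $M$ can be reached only from $0$ by a $+M$ step, forcing $\sigma(0)=M$, and dually the minimum $-(m-1)$ can be reached only from $1$ by a $-m$ step; thus a term $M$ and a term $-m$ are both present. I would then propagate this rigidity along the cycle, equivalently showing that $s_k\equiv kM\pmod{m+M}$ for every $k$ so that each step is forced to be either $+M$ or $-m$, either by a descent that peels off the current extreme value or by a Euclidean-type reduction on $(m,M)$ mirroring the $\gcd$ in the statement. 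Once $\ss=M^{m}\cdot(-m)^{M}$ is established, $\gcd(m,M)=1$ follows at once from the minimality computation of the ``if'' direction. This propagation step, which turns a purely extremal and counting constraint into the exact multiset of terms, is where I expect the main difficulty to lie.
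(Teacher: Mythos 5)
Your ``if'' direction, the greedy (nyctalopic) ordering of the terms, the pigeonhole argument showing that the partial sums $s_0,\dots,s_{m+M-1}$ fill out $V=\llb -(m-1),M\rrb$, and the resulting count of $m$ positive and $M$ negative terms are all correct and in the spirit of the paper's machinery. But the write-up stops exactly where the real work begins: the ``propagation'' step, i.e. the claim that $s_k\equiv kM\pmod{m+M}$ for all $k$, or equivalently that every step of the walk is $+M$ or $-m$, is only announced (``either by a descent that peels off the current extreme value or by a Euclidean-type reduction'') and never carried out --- you say yourself that this is where you expect the main difficulty to lie. The two facts you do establish, namely $s_1=M$ and the existence of one $-m$ step landing on $-(m-1)$, do not force the rest of the trajectory: for instance, when $M\ge 2$ nothing you have proved excludes $s_2=M-1$, i.e. a $-1$ step out of $M$, since $M-1$ can a priori be reached either from $-1$ by a $+M$ step or from $M$ by a $-1$ step. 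So the forward implication is not proved (except in the degenerate case $m=1$, where every negative term is automatically $-m$), and this missing step is essentially the entire content of the theorem beyond the counting.

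The paper avoids the need for any propagation by a different, one-line choice of starting point for the walk. Instead of starting at $0$ and trying to show the whole trajectory is rigid, it assumes for contradiction that $\ss$ contains a term $x_i\notin\{-m,M\}$ and starts the nyctalopic permutation there, setting $\sigma(1)=i$ and extending via Lemma \ref{prolong}. The refined (``Moreover'') part of Lemma \ref{prop-nyctalopic} then confines \emph{all} the partial sums to $\llb -(m-1),M-1\rrb$, because the first term is neither $\min X$ nor $\max X$; this set has only $m+M-1$ elements, so Lemma \ref{lem:pigeon} \ref{pig1} gives $\|\ss\|\le m+M-1$, contradicting $\|\ss\|=m+M$. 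Hence every term of $\ss$ is $-m$ or $M$, and Lemma \ref{seq} \ref{seq1} immediately yields both the multiplicities $M^{m}\cdot(-m)^{M}$ and the coprimality $\gcd(m,M)=1$. If you want to salvage your outline, the cleanest fix is to replace the propagation step by this trick: the strict form of the interval confinement, applied to a walk started at the putative bad element, does all the work.
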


This in turn leads to the following corollary.

\begin{corollary}
\label{cor:3}
Let $m \ge 2$ be an integer and let $\ss = x_1 \cdots x_{2m-1}$ be a sequence of length $2m-1$ in $\BBc(\llb -m, m \rrb)$.
Then, $\ss$ is minimal if and only if $\ss = m^{m-1} \cdot (-(m-1))^m$ or $\ss = (-m)^{m-1} \cdot (m-1)^m$.
\end{corollary}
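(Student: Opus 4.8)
The plan is to derive Corollary~\ref{cor:3} directly from Theorem~\ref{th:main_2} by reducing the symmetric interval $\llb -m, m\rrb$ to two coprime sub-intervals. The obstacle to a one-line application is that $\gcd(m,m) = m \neq 1$ for $m \geq 2$, so Theorem~\ref{th:main_2} does not apply to the full interval $\llb -m, m\rrb$ as a single instance. First I would observe that any minimal zero-sum sequence $\ss = x_1 \cdots x_{2m-1}$ of length $2m-1 = \DD(\llb -m, m\rrb)$ (the maximal length, by Corollary~\ref{cor:2}) must be ``extremal'' in the sense of Theorem~\ref{th:main_1}(iii): since $\DD(\llb -m, m\rrb) \le \diam(\llb -m, m\rrb) = 2m$ but in fact equals $2m - 1$, a length-$(2m-1)$ minimal sequence is as long as possible and its entries are forced to be near the endpoints.

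The key structural step is to show that such a sequence cannot simultaneously use both $m$ and $-m$ while remaining minimal and of this length, and more precisely that it must be supported on a coprime pair of extreme values. I would argue as follows. By Theorem~\ref{th:main_1}(iii), $\ss$ contains both positive and negative elements, and a counting/averaging argument (analogous to the one underlying Theorem~\ref{th:main_2}) shows that to achieve length $2m - 1$ the positive part must consist of copies of a single value $a$ and the negative part of copies of a single value $-b$, with $a, b \in \llb 1, m\rrb$. Writing $\ss = a^{\,s} \cdot (-b)^{\,t}$ with $s + t = 2m - 1$, the zero-sum condition gives $as = bt$, and minimality forces $\gcd(a,b) = 1$ together with $s = b/\gcd(a,b) = b$ and $t = a/\gcd(a,b) = a$, so $s + t = a + b = 2m - 1$. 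Combined with the constraints $1 \le a, b \le m$, the only solutions of $a + b = 2m - 1$ are $\{a, b\} = \{m, m-1\}$, which indeed satisfy $\gcd(a, b) = \gcd(m, m-1) = 1$.

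To make the single-value reduction rigorous I would invoke Theorem~\ref{th:main_2} on a coprime interval directly: once it is known that the support of $\ss$ lies in $\{m - 1, m\} \cup \{-(m-1), -m\}$, one checks that the admissible coprime configurations are exactly $(a, b) = (m, m-1)$ and $(a, b) = (m-1, m)$. The first gives $\ss = m^{m-1} \cdot (-(m-1))^m$, the second gives $\ss = (m-1)^{m} \cdot (-m)^{m-1}$; rewriting the latter as $(-m)^{m-1} \cdot (m-1)^m$ yields precisely the two sequences in the statement. Conversely, for the ``if'' direction I would verify by inspection that each listed sequence lies in $\BBc(\llb -m, m\rrb)$, has length $2m - 1$, sums to zero (since $m \cdot (m-1) = (m-1) \cdot m$), and is minimal by applying Theorem~\ref{th:main_2} to the coprime pair $(m, m-1)$, i.e.\ to the interval $\llb -(m-1), m\rrb$ or $\llb -m, m-1\rrb$ as appropriate.

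The main obstacle I anticipate is the single-value reduction, namely proving that a length-$(2m-1)$ minimal sequence cannot use three or more distinct absolute values among its entries; this is where the length being \emph{exactly} $\DD(\llb -m, m\rrb)$ rather than merely large is essential, and the cleanest route is to show that any mixing of distinct extreme values either creates a shorter zero-sum subsequence (violating minimality) or fails to reach length $2m - 1$. I expect this to follow from the same pigeonhole mechanism (Lemma~\ref{lem:pigeon}) that drives Theorem~\ref{th:main_2}, so that Corollary~\ref{cor:3} is genuinely a corollary rather than an independent argument.
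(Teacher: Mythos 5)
There is a genuine gap at the step you yourself flag as the main obstacle: the reduction to a single positive value and a single negative value. You assert that ``a counting/averaging argument analogous to the one underlying Theorem~\ref{th:main_2}'' forces $\ss = a^s\cdot(-b)^t$, but the direct analogue of that argument fails here. In the proof of Theorem~\ref{th:main_2} one starts a nyctalopic permutation at an element different from both endpoints and traps all partial sums in an interval of cardinality $m+M-1$, one less than the length of $\ss$ --- contradiction. For the symmetric interval $\llb -m,m\rrb$ and a sequence of length $2m-1$, the same manoeuvre only traps the partial sums in $\llb -(m-1),m-1\rrb$, a set of cardinality exactly $2m-1$, so Lemma~\ref{lem:pigeon}~\ref{pig1} yields $\|\ss\|\le 2m-1$ and no contradiction. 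Indeed no contradiction is possible, since the extremal sequences do contain the element $-(m-1)$, which is different from both endpoints. So the single-value reduction does not follow from the mechanism you invoke, and your claim that the support lies in $\{m-1,m\}\cup\{-(m-1),-m\}$ is left unproved.

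The missing idea is much simpler and is exactly what makes the statement a genuine corollary: Lemma~\ref{lem:trivial}~\ref{lem:trivial:item_iii} says that a minimal zero-sum sequence containing both $x$ and $-x$ for some $x\ne 0$ has length $2$. Since $2m-1>2$ for $m\ge 2$, the sequence $\ss$ cannot contain both $m$ and $-m$; hence, up to the sign symmetry of Lemma~\ref{lem:trivial}~\ref{lem:trivial:item_0}, $\ss\in\BBc(\llb -(m-1),m\rrb)$. Now $2m-1=(m-1)+m$ is exactly the length hypothesis of Theorem~\ref{th:main_2} for this interval, and $\gcd(m-1,m)=1$, so that theorem applies verbatim and already delivers the full structure $\ss=m^{m-1}\cdot(-(m-1))^m$ --- including the single-value reduction you were trying to establish separately. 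Your algebraic endgame ($as=bt$, $a+b=2m-1$, $\{a,b\}=\{m,m-1\}$) and your verification of the ``if'' direction are fine, but they become unnecessary once the reduction to the coprime interval is done first; the whole proof is three lines.
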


Our next theorem is a more elaborate inverse result which reads as follows.

\begin{theorem}
\label{th:inverse_2m-2}
Let $m$ be an integer, $m \ge 3$, and let $\ss = x_1 \cdots x_{2m-2}$ be a sequence of length $2m-2$
in $\BBc(\llb -m, m \rrb)$. Then, $\ss$ is minimal if and only if one of the following holds:
\begin{enumerate}[label={\rm (\roman{*})}]
\item $m$ is odd and either $\ss = m^{m-2} \cdot (-m+2)^m$ or $\ss = (-m)^{m-2} \cdot (m-2)^m$;
\item $\ss = m^{m-2} \cdot (-(m-1))^{m-1} \cdot 1$ or $\ss = (-m)^{m-2} \cdot (m-1)^{m-1} \cdot (-1)$.
\end{enumerate}
\end{theorem}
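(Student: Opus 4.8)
The plan is to prove both implications separately, the forward (``if'') direction by direct verification and the converse by a reduction followed by a split according to the number of distinct values occurring in $\ss$.

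For the forward direction I would check that each listed multiset is a zero-sum sequence of length $2m-2$ in $\BBc(\llb -m,m\rrb)$ and then verify minimality by hand. Since the two orientations in (i) (resp.\ (ii)) are interchanged by the involution $x\mapsto -x$, which is an automorphism of $\BBc(\llb -m,m\rrb)$, it suffices to treat $\ss=m^{m-2}\cdot(-m+2)^{m}$ and $\ss=m^{m-2}\cdot(-(m-1))^{m-1}\cdot 1$. For the first, any nonempty zero-sum sub-multiset has the form $m^{a}\cdot(-m+2)^{b}$ with $am=b(m-2)$, $0\le a\le m-2$ and $0\le b\le m$; because $\gcd(m,m-2)=1$ holds exactly when $m$ is odd, the only in-range solutions are the empty and the total one precisely in the odd case, which explains the parity hypothesis in (i). For the second, a sub-multiset $m^{a}\cdot(-(m-1))^{b}\cdot 1^{c}$ with $c\in\{0,1\}$ is zero-sum iff $am-b(m-1)+c=0$, and using $\gcd(m,m-1)=1$ one sees that $c=0$ forces $a=b=0$ while $c=1$ forces $a=m-2$, $b=m-1$; so again only the trivial and total sub-multisets vanish.

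For the converse, let $\ss$ be minimal of length $2m-2$. First $0\notin\ss$, since an occurrence of $0$ would be a proper nonempty zero-sum subsequence. Clearly $\ss$ contains elements of both signs, and if every element had absolute value at most $m-1$ then $\ss\in\BBc(\llb -(m-1),m-1\rrb)$, whence $\|\ss\|\le\DD(\llb -(m-1),m-1\rrb)=2m-3$ by Corollary \ref{cor:2}, a contradiction; hence $m$ or $-m$ occurs and, by the symmetry $x\mapsto -x$, I may assume $m\in\ss$. Moreover $-m\notin\ss$, for otherwise $m\cdot(-m)$ would be a proper zero-sum subsequence of length $2<2m-2$. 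Thus every positive element lies in $\llb 1,m\rrb$, every negative element in $\llb -(m-1),-1\rrb$, and $m$ occurs, say, $k\ge 1$ times. The central tool is the pairwise obstruction: whenever a positive $a$ and a negative $-b$ both occur, the multiset $a^{b/d}\cdot(-b)^{a/d}$ with $d=\gcd(a,b)$ is zero-sum of length $(a+b)/d$, so by minimality it must be improper. In particular, if $\ss$ has exactly two distinct values they are $m$ and some $-b$, minimality forces $\ss=m^{b}\cdot(-b)^{m}$ to be the primitive solution with $\gcd(m,b)=1$, and comparing lengths gives $m+b=2m-2$, i.e.\ $b=m-2$ with $\gcd(m,m-2)=1$, hence $m$ odd: this is exactly case (i).

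It remains to treat the case of three or more distinct values, which is the crux. Here every sub-multiset of the pairwise type is automatically proper, so minimality gives, for each present negative $-b$, that $\ss$ cannot contain simultaneously $b/d$ copies of $m$ and $m/d$ copies of $-b$ (with $d=\gcd(m,b)$). Writing $p$, $n$ for the numbers of positive and negative terms and $\Sigma$ for the common positive/negative mass, I would combine these relations with the bookkeeping $p+n=2m-2$, $\Sigma\le(m-1)n$ (all negatives have magnitude at most $m-1$) and $\Sigma\ge mk+(p-k)$, reinforced by the pigeonhole mechanism of Lemma \ref{lem:pigeon}, to force the negative part to be exactly $m-1$ copies of $-(m-1)$ and the positive part to be $m^{m-2}\cdot 1$, yielding case (ii). The main obstacle is precisely this rigidity step: one must exclude every intermediate negative magnitude and every extra small positive while respecting both the length equality and the absence of proper zero-sum subsequences, which cannot be done by a single clean inequality but only by playing several pairwise relations against one another.
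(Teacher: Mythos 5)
Your forward direction and the initial reductions (excluding $0$, excluding $m$ and $-m$ together, reducing by the symmetry $x\mapsto -x$ to $m\in\ss$ and $-m\notin\ss$, and the two-value analysis yielding case (i)) are correct and essentially agree with the paper, which organizes the dichotomy by whether $-(m-1)\in\ss$ and disposes of the first branch by quoting Theorem \ref{th:main_2}. The problem is that your treatment of the remaining case --- three or more distinct values --- is not a proof but an announcement of one, and the tools you propose are demonstrably too weak. The pairwise obstruction you formulate (that $\ss$ cannot contain $b/d$ copies of $a$ together with $a/d$ copies of $-b$, $d=\gcd(a,b)$) is vacuous against, say, a single occurrence of $-2$ alongside $m$ for odd $m\ge 5$, since the primitive relation $m^{2}\cdot(-2)^{m}$ needs $m$ copies of $-2$. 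Likewise the bookkeeping $p+n=2m-2$, $\Sigma\le(m-1)n$, $\Sigma\ge mk+(p-k)$ admits spurious solutions: for $m=5$ the multiset $5^{2}\cdot 3^{2}\cdot(-4)^{4}$ satisfies all of these constraints and all pairwise obstructions, yet is not minimal and not of the claimed forms; it is only eliminated by the three-value relation $5\cdot 3\cdot(-4)^{2}=0$, which your list of pairwise relations never produces. You concede that the rigidity step ``cannot be done by a single clean inequality,'' but you never supply the combination of relations that does it, and I do not see how to complete the argument along the lines you indicate.

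What the paper actually does at this point is qualitatively different, and it is where all the difficulty lives. To show that every negative entry equals $-(m-1)$ (its internal Lemma \ref{neg}), it orders the sequence nyctalopically starting with $m,\,-(m-1),\,x_i$ for a hypothetical negative $x_i\ne-(m-1)$, shows via Lemma \ref{prop-nyctalopic} that all partial sums land in $\llb -(m-3),m\rrb$, and then invokes the refined count of Lemma \ref{lem:pigeon} \ref{pig2} (the partial sums also avoid $x_{\sigma(1)}+x_{\sigma(3)}=m+x_i$) to force $\|\ss\|\le 2m-3$, a contradiction. To pin down the positive part as $m^{m-2}\cdot 1$ (its Lemma \ref{pos}), it runs a sign-alternation induction on the partial sums of a nyctalopic ordering beginning with the small positive element, using that the $2m-3$ proper partial sums exhaust $\llb -(m-2),m-1\rrb\setminus\{0\}$. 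Some argument of this partial-sum flavour, or a genuine substitute for it, is indispensable; without it your proposal has a gap exactly at the theorem's core.
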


The next theorem is a partial generalisation of the upper bound in Corollary \ref{cor:2} to higher dimensions.
It will follow from the connection, already noticed in \cite{DGS}, of the Davenport constant with the Steinitz
constant \cite{Stein13} and a generalisation of it obtained in \cite{Banasz91}.

\begin{theorem}
\label{barbie}
Let $m_1, \ldots, m_d$ be positive integers, we have
$$
\DD( \llb -m_1, m_1 \rrb \times \cdots \times \llb -m_d, m_d \rrb  ) \leq  \prod_{i=1}^d  \left(  2 \left( d+ \frac1d - 1 \right) m_i +1\right).
$$
\end{theorem}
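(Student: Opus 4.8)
The plan is to bound the Davenport constant of the $d$-dimensional box via its connection to the Steinitz constant, as foreshadowed in the theorem statement's reference to \cite{DGS,Stein13,Banasz91}. The key observation is that a minimal zero-sum sequence over a box is, in particular, a zero-sum sequence, so reordering arguments controlling partial sums will be available. Let me think about what the Steinitz constant gives us.

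Let me recall what the Steinitz lemma says. For vectors in $\mathbb{R}^d$ with some norm, if $v_1, \ldots, v_n$ sum to zero, there's a reordering so all partial sums stay bounded by $C \cdot \max \|v_i\|$, where $C$ is the Steinitz constant for that norm. In dimension $d$ with sup-norm, the relevant constant is something like $d$.

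Now how does this bound the Davenport constant? Let me think in 1D first. Interval $\llbracket -m, M\rrbracket$. A minimal zero-sum sequence... the upper bound was $\diam = m+M$.

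Actually let me think about the connection more carefully. Suppose $\mathfrak{s} = x_1 \cdots x_n$ is a minimal zero-sum sequence in the box $B = \prod \llbracket -m_i, m_i\rrbracket$. Since it sums to zero, by Steinitz there's a reordering with all partial sums $s_k = \sum_{j\le k} x_{\sigma(j)}$ lying in a bounded region. The point of minimality: if any partial sum (proper, nonempty) equals zero, that contradicts minimality. So all partial sums $s_1, \ldots, s_{n-1}$ are nonzero lattice points in a bounded box. If the box of partial sums has size bounded by the RHS factors, then $n-1 \le$ (number of nonzero lattice points), giving $n \le$ product $+ 1$... hmm but the bound has a product form with $+1$ inside each factor.

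Wait — the bound is $\prod_i (2(d + 1/d - 1)m_i + 1)$. This is exactly the number of lattice points in a box $\prod \llbracket -R_i, R_i\rrbracket$ where $R_i = (d+1/d-1)m_i$. So the Steinitz bound confines partial sums to such a box, and distinctness of partial sums (from minimality + pigeonhole) gives the count.

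**First I would** state the generalized Steinitz/Banaszczyk result: there is a reordering of any zero-summing family $v_1,\dots,v_n$ in $\mathbb{R}^d$ such that every partial sum has sup-norm at most $C_d \max_i \|v_i\|_\infty$, with $C_d = d + 1/d - 1$ (this is the constant from Banaszczyk giving the Steinitz constant in the $\ell_\infty$ norm, or whatever exact value \cite{Banasz91} provides). Given $\mathfrak{s} \in \mathcal{A}(B)$, each $x_i$ satisfies $\|x_i\|_\infty \le \max_i m_i$, but we need coordinatewise control, so I'd apply the estimate coordinate by coordinate (or use the anisotropic version): the $i$-th coordinate of each vector is bounded by $m_i$, so after reordering, each partial sum's $i$-th coordinate lies in $\llbracket -C_d m_i, C_d m_i\rrbracket$.

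**Next**, invoke minimality: the $n-1$ proper partial sums $s_1, \ldots, s_{n-1}$ are all nonzero, and moreover pairwise distinct (if $s_k = s_\ell$ with $k<\ell$, then $\sum_{k<j\le\ell} x_{\sigma(j)} = 0$ is a proper nonempty zero-sum, contradiction). Hence these are $n-1$ distinct lattice points, none equal to $0$ and none equal to $s_n = 0$, all lying in the integer box $\prod_i \llbracket -C_d m_i, C_d m_i\rrbracket$. Actually together with $s_n=0$ we get $n$ distinct points in that box, so $n \le \prod_i (2C_d m_i + 1) = \prod_i(2(d+1/d-1)m_i+1)$, which is exactly the claimed bound. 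Since this holds for every minimal zero-sum sequence, it bounds $\mathsf{D}(B)$.

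**The hard part will be** pinning down the exact Steinitz constant $C_d = d + 1/d - 1$ and confirming it transfers to the anisotropic, coordinatewise bound needed here: the classical Steinitz constant is stated for a fixed norm, whereas I want each coordinate controlled by its own $m_i$. The cleanest route is to rescale coordinates by $1/m_i$ so the box becomes the cube $\llbracket -1,1\rrbracket^d$ in rescaled space, apply the $\ell_\infty$-Steinitz bound of \cite{Banasz91} (whose value is exactly $d+1/d-1$ for the sup-norm — this is presumably the precise statement being cited), then rescale back to recover the per-coordinate bound $C_d m_i$. I'd need to check that the \emph{same} reordering works simultaneously for all coordinates, which it does since Steinitz produces a single ordering valid in the chosen norm; choosing the sup-norm after rescaling is what makes the coordinatewise conclusion come out. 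The only genuine subtlety is matching the constant in \cite{Banasz91} to the sup-norm normalization, after which the lattice-point count closes the argument immediately.
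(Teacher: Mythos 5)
Your proposal is correct and follows essentially the same route as the paper: rescale each coordinate by $1/m_i$ so the box becomes the unit ball for $\|\cdot\|_\infty$, apply the Steinitz-type reordering with the constant $C_d \le d + 1/d - 1$ (the paper takes this bound from Banaszczyk's 1987 paper on the Steinitz constant, not the 1991 one, which is used only for the $d=2$ refinement), then count the distinct lattice points that the partial sums of a minimal zero-sum sequence must occupy. The only cosmetic difference is that the paper phrases the final count via its Lemma \ref{lem:pigeon}\ \ref{pig1} applied to all $n$ partial sums rather than to the $n-1$ proper ones plus the origin.
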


Our next result is concerned with the special case of hypercubes.
We shall need a Kronecker-type notation (defined on positive integers $m$), namely
$$
\delta_m =
\left\{
\begin{array}{ll}
1 & \text {if } m=1,\\
0 & \text{otherwise.}
\end{array}
\right.
$$
We obtain the following bounds.

\begin{theorem}
\label{th:main_3}
One has
\begin{enumerate}[label={\rm (\roman{*})}]
\item $ \DD(\llb -1,1\rrb^2)=4$,
\item for any integer $m \geq 2$,
$$
(2m-1)^2 \le \DD(\llb -m,m\rrb^2) \le (2m+1)(4m+1),
$$
\item if $d$ is an integer, $d\ge 3$, and $m$ is positive integer,
$$
(2m-1+ \delta_m)^d \le \DD(\llb -m,m\rrb^d) \le \left( 2 \left(d+\frac1d -1 \right) m+1 \right)^d.
$$
\end{enumerate}
\end{theorem}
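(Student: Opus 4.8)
The plan is to treat the upper and lower bounds separately, the upper bound in (iii) being immediate while the other five estimates each demand a dedicated argument of the type already developed in the paper. For (iii) I would simply specialise Theorem~\ref{barbie} to $m_1=\cdots=m_d=m$, which returns exactly $\bigl(2(d+\frac1d-1)m+1\bigr)^d$. The upper bound in (ii) is strictly smaller than the value $(3m+1)^2$ that Theorem~\ref{barbie} yields for $d=2$, so it requires sharpening the underlying Steinitz-type estimate in the plane. Here I would use the elementary but crucial remark that \emph{for a minimal zero-sum sequence the partial sums taken in any order are pairwise distinct}: if $x_1\cdots x_n$ is minimal and $P_k=x_1+\cdots+x_k$, an equality $P_i=P_j$ with $0\le i<j\le n$ and $(i,j)\ne(0,n)$ would exhibit the proper zero-sum block $x_{i+1}\cdots x_j$. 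Thus $n$ is bounded by the number of lattice points the partial sums can be confined to. The improvement comes from a coordinatewise rearrangement: one first orders the terms so that the first-coordinate partial sums stay inside $\llb -m,m\rrb$ (always possible for a zero-sum one-dimensional sequence with entries in $\llb -m,m\rrb$, by a standard greedy argument), and then exploits the remaining freedom to keep the second-coordinate partial sums inside $\llb -2m,2m\rrb$, the doubling of the radius being the price paid for having fixed the first coordinate. The $n$ distinct partial sums then lie in $\llb -m,m\rrb\times\llb -2m,2m\rrb$, giving $n\le(2m+1)(4m+1)$.

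All three lower estimates coincide with $\DD(\llb -m,m\rrb)^d$, since Corollary~\ref{cor:2} gives $\DD(\llb -m,m\rrb)=2m-1+\delta_m$; in particular (i) reads $2^2=4$, (ii) reads $(2m-1)^2$, and (iii) reads $(2m-1+\delta_m)^d$. One is tempted to deduce these from a general supermultiplicativity inequality $\DD(X\times Y)\ge \DD(X)\DD(Y)$, but this already fails for subsets of $\ZZ$ (for instance $\DD(\{1,-2\}^2)=3<9=\DD(\{1,-2\})^2$), so the construction must genuinely use the full width of the interval. The guiding idea is inductive: starting from the extremal one-dimensional sequence $m^{m-1}\cdot(-(m-1))^m$ of Corollary~\ref{cor:3}, one builds the $d$-dimensional sequence out of $2m-1$ copies of the $(d-1)$-dimensional one, using the new coordinate to \emph{weight} the copies by a minimal one-dimensional zero-sum sequence, so that any subset which cancels in the old coordinates is forced to select whole copies and then to balance their weights, which by minimality of the weighting sequence forces an all-or-nothing choice. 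I would stress that the naive tensor product $\{(u_i,v_j)\}$ of two extremal sequences is \emph{not} minimal (already $(2,-1)(-1,2)(-1,-1)$ cancels in $\llb -2,2\rrb^2$), and that even the simplest copy-weighting breaks down for $m\ge 3$, so the actual choice of representatives has to be made with care.

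For (i) the lower bound is the instance $d=2,m=1$ of the construction (the explicit minimal sequence $(1,1)(1,-1)(-1,0)(-1,0)$ of length $4$ does the job), while the upper bound $\DD(\llb -1,1\rrb^2)\le 4$ does not follow from Theorem~\ref{barbie} (which only gives $16$) and is instead obtained by a short finite analysis: a minimal sequence of length $\ge 3$ can contain neither $(0,0)$ nor an antipodal pair $\{v,-v\}$, so its support is a transversal of the four antipodal pairs of nonzero points of $\{-1,0,1\}^2$, after which inspecting the possible supports bounds the length by $4$. The main obstacle is unquestionably the lower-bound construction: producing, for every $m$ and $d$, an explicit family of minimal zero-sum sequences of length $(2m-1+\delta_m)^d$ and verifying minimality, i.e.\ controlling how partial cancellations in the first $d-1$ coordinates interact with the weighting in the last one. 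The upper bound in (ii) is a secondary difficulty, requiring the precise radii $m$ and $2m$ to be justified by the coordinatewise rearrangement rather than by the generic Steinitz constant.
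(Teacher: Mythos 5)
Your outline reproduces the architecture of the paper's proof --- Steinitz-type rearrangement plus the distinct-partial-sums pigeonhole for the upper bounds, a finite case analysis for (i), and an inductive weighted-copies construction for the lower bounds --- and your side remarks (the failure of naive supermultiplicativity for $\{1,-2\}$, the non-minimality of the naive tensor product, the explicit length-$4$ sequence in $\llb -1,1\rrb^2$) are correct. But the two steps that carry the real weight are asserted rather than proved. The first gap is the upper bound in (ii): the claim that a zero-sum sequence with terms in the unit ball of $(\RR^2,\|\cdot\|_\infty)$ can be reordered so that all partial sums lie in $[-1,1]\times[-2,2]$ is exactly the main theorem of Banaszczyk \cite{Banasz91} (with $a=1$, $b=2$), and it is not elementary. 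Your justification --- greedily control the first coordinate, then ``exploit the remaining freedom'' on the second, ``the doubling of the radius being the price paid'' --- is not an argument: once an ordering taming the first coordinate is fixed there is no obvious residual freedom, and showing that the price is a factor of $2$ (rather than something growing with $n$) is precisely the hard content of Banaszczyk's theorem. You must either invoke that result explicitly, as the paper does, or prove it.

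The second and more serious gap is the lower bound for $m\ge 2$. You present the weighted-copies construction only as a ``guiding idea,'' concede that ``the actual choice of representatives has to be made with care,'' and label the construction together with its minimality verification ``the main obstacle'' --- without overcoming it. The paper takes $\ss_1=m^{m-1}\cdot(-(m-1))^m$ and $\ss_{d+1}=(x_1,m)^{m-1}\cdots(x_n,m)^{m-1}\cdot(0,-(m-1))^{mn}$, and minimality is not proved directly: the induction only closes because one proves the stronger statement (Lemma~\ref{ssdj}) that the only non-empty zero-sum subsequences of the \emph{power} $\ss_d^{u}$ are the $\ss_d^{j}$, since the projection of a subsequence of $\ss_{d+1}$ onto the first $d$ coordinates lands in $\ss_d^{(m-1)u}$ rather than in $\ss_d$. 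Nothing in your proposal supplies this strengthened induction, nor the arithmetic ($\gcd(m-1,m)=\gcd(m-1,2m-1)=1$) that forces the all-or-nothing conclusion from the last coordinate. Finally, this construction degenerates when $m=1$ (the multiplicities $m-1$ vanish), so the case $m=1$, $d\ge 3$ of (iii) requires a separate explicit family --- the paper uses $e_1\cdot e_2\cdot e_3^{2}\cdots e_{d+1}^{2^{d-1}}$ of length $2^d$ --- which your proposal does not address at all beyond $d=2$.
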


The lower bounds in this theorem are obtained thanks to direct constructions, while
the upper bounds follow immediately from Theorem \ref{barbie}.
Theorem \ref{th:main_3} being proved, the general impression,
supported by the special cases of the dimension $d=1$
and the square $\llb -1,1\rrb^2$, is that the true size of $\DD(\llb -m,m\rrb^d)$
is closer to the lower bound than to the upper bound.

We notice that in \cite{XXXX} the authors consider the case
$$
X = \llb 0, 1 \rrb^d \cup \llb -1,0 \rrb^d  \setminus \{ 0^d \}
$$
where $0^d$ is the origin in $\RR^d$, and they prove a result that is reminiscent of our
Theorem \ref{th:main_3} (iii), see \cite[Theorem 3.13]{XXXX}. Loosely speaking, they obtain the bounds
\begin{equation}
\label{gerol}
\left( \frac{1+ \sqrt{5}}{2} \right)^d  \leq \DD (X) \leq (d+2)^{(d+2)/2}.
\end{equation}
Although this set $X$ is not an hypercube, as we consider here, we may still force
the (somewhat unnatural) direct application of the upper bound of Theorem \ref{th:main_3}
(our lower bound gives nothing in this case), which implies for this case that
$\DD (X) \leq \DD ( \llb -1,1\rrb^d) \le \left( 2d+ 2/d -1 \right)^d$ and is definitely worse than
\eqref{gerol}, but still of the same ``type''. It would be interesting to check if our method could be efficiently
adapted  to this special case.

We notice that Theorem \ref{th:main_3} is enough to ensure that, for fixed $d$,
the quantity $\DD(\llb -m,m\rrb^d)$ grows like $m^d$.
But it is not clear that a constant $a_d$ should exist so that
$$
\DD(\llb -m,m\rrb^d) \sim a_d m^d\quad \text{ as } m \to + \infty.
$$
However, if such a constant exist it must satisfy $2^d \leq a_d \leq (2 \left(d+1/d -1 \right))^d$.

Based on the above, we are led to ask whether, $m$ and $d$ being given as in the statement
of Theorem \ref{th:main_3}, the Davenport constant of the hypercube $\llb -m, m \rrb^d$
is equal to the $d$-th power of the Davenport constant of $\llb -m, m \rrb$.
Should this be true, it would suggest that some suitable assumptions could imply a sort
of multiplicativity of Davenport constants for certain classes of sets.
Our two last theorems and their corollary go more generally in this direction.
The first of these theorems is a submultiplicativity result.

\begin{theorem}
\label{th:main_4}
Let $G$ and $H$ be two abelian groups. If $G$ is finite and $X$ is a finite subset of $H$,
then
$$
\DD(G \times X) \leq   \DD(G)\ \DD (X).
$$
\end{theorem}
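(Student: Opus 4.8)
The plan is to bound the length of an arbitrary minimal zero-sum sequence over the product set $G \times X \subseteq G \times H$ and then pass to the supremum. So I would fix $\ss = (g_1,x_1)\cdots(g_n,x_n) \in \AAc(G \times X)$, where $g_i \in G$, $x_i \in X$, and $\sum_{i} g_i = 0$ in $G$ while $\sum_{i} x_i = 0$ in $H$, and aim to prove $n \le \DD(G)\,\DD(X)$. We may assume $\DD(X) < \infty$, since otherwise the right-hand side is infinite and there is nothing to show.

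First I would project onto the second coordinate. The sequence $T = x_1 \cdots x_n$ belongs to $\BBc(X)$, and every zero-sum sequence factors as a product of minimal ones (repeatedly split off a nonempty proper zero-sum subsequence and induct on the length), so we may write $T = U_1 \cdots U_k$ with each $U_j \in \AAc(X)$. Concretely this partitions $\{1,\ldots,n\}$ into nonempty blocks $I_1,\ldots,I_k$ with $\sum_{i \in I_j} x_i = 0$ for every $j$, and with $|I_j| = \|U_j\| \le \DD(X)$ since $U_j$ is a minimal zero-sum sequence over $X$.

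Next I would contract each block back in the first coordinate by setting $s_j = \sum_{i \in I_j} g_i \in G$. Then $s_1 \cdots s_k \in \BBc(G)$, because $\sum_{j} s_j = \sum_i g_i = 0$. The crucial step is to show that $s_1 \cdots s_k$ is itself \emph{minimal} over $G$: for any nonempty proper subset $J \subsetneq \{1,\ldots,k\}$, the index set $\bigcup_{j \in J} I_j$ is a nonempty proper subset of $\{1,\ldots,n\}$, and the corresponding partial sum of $\ss$ equals $\big(\sum_{j \in J} s_j,\ \sum_{j \in J}\sum_{i \in I_j} x_i\big) = \big(\sum_{j \in J} s_j,\ 0\big)$; minimality of $\ss$ forces this to be nonzero, whence $\sum_{j \in J} s_j \ne 0$. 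Thus $s_1 \cdots s_k$ has no nonempty proper zero-sum subsequence and is minimal, giving $k \le \DD(G)$ (the case $k=1$ being vacuous and consistent with $\DD(G) \ge 1$).

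Finally I would add up the block lengths: $n = \sum_{j=1}^{k} |I_j| \le \sum_{j=1}^{k} \DD(X) = k\,\DD(X) \le \DD(G)\,\DD(X)$, which is exactly the claimed inequality. The main obstacle is the middle step, namely verifying that contracting the $X$-blocks yields a genuinely minimal sequence over $G$; this is the only place where minimality of $\ss$ is used, and everything else is bookkeeping. Finiteness of $G$ serves only to guarantee $\DD(G) < \infty$ so that the bound is meaningful, and finiteness of $X$ is not needed for the argument beyond ensuring the factorization of $T$ is available (which in fact holds in general).
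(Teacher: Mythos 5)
Your proof is correct, and it rests on the same decomposition as the paper's: split the sequence into blocks that are minimal zero-sum on the $X$-coordinate (each of length at most $\DD(X)$) and contract each block to its $G$-sum. The one genuine difference is the finish. The paper argues by contradiction: it extracts exactly $l=\DD(G)$ such blocks from a sequence of length at least $\DD(G)\,\DD(X)+1$, checks that a non-empty remainder survives, and invokes the threshold characterization of $\DD(G)$ (every sequence of length $\DD(G)$ over $G$ has a non-empty zero-sum subsequence) to assemble a proper non-empty zero-sum subsequence of $\ss$. You instead decompose the entire projection, observe that the contracted sequence $s_1\cdots s_k$ is itself a \emph{minimal} zero-sum sequence over $G$ --- a slightly stronger statement, verified exactly as you do via minimality of $\ss$ --- and conclude $k\le\DD(G)$ directly from the inverse characterization of $\DD(G)$ as the maximal length of a minimal zero-sum sequence. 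This buys you a direct proof with no need to track the non-emptiness of a leftover piece, and, as you note, it makes clear that finiteness of $G$ and $X$ is only used to ensure the right-hand side is finite.
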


The final theorem shows a supermultiplicativity property, not with respect to the Davenport
constants themselves but rather with respect to the lower bounds offered by Theorem \ref{th:main_3}.
Indeed, we shall build long minimal zero-sum sequences on the basis of those already built
for each component.

\begin{theorem}
\label{th5}
Let $m$ and $d$ be positive integers and let $G$ be a cyclic group, then
$$
\DD(G \times \llb -m, m \rrb^d ) \geq  \DD(G) (2m-1+ \delta_m)^d.
$$
\end{theorem}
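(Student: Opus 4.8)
The plan is to exhibit an explicit minimal zero-sum sequence over $G \times \llb -m,m\rrb^d$ of length $\DD(G)\,(2m-1+\delta_m)^d$. First I would dispose of the case where $G$ is infinite cyclic: then $\DD(G)=+\infty$ (for instance $M\cdot(-1)^M$ is a minimal zero-sum sequence over $\ZZ$ of length $M+1$ for every $M$), while $G\times\{0\}\subseteq G\times\llb-m,m\rrb^d$ already forces $\DD(G\times\llb-m,m\rrb^d)=+\infty$ by monotonicity, so the bound holds trivially. Hence I assume $G$ finite cyclic of order $n$; by \eqref{cyclic} we have $\DD(G)=n$, realised by $g^n$ for a generator $g$. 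The other ingredient is the sequence $\tt$ over $\llb-m,m\rrb^d$ built in the proof of Theorem \ref{th:main_3}, of length $\ell:=(2m-1+\delta_m)^d$. The key structural feature I would extract from that construction is that $\tt$ is \emph{simplicial}: it is supported on exactly $d+1$ points $\tau_0,\dots,\tau_d\in\llb-m,m\rrb^d$ that span $\RR^d$, with multiplicities $\mathbf{m}=(m_0,\dots,m_d)$, so that $\sum_i m_i\tau_i=0$, $\sum_i m_i=\ell$, and $\mathbf{m}$ is primitive (its entries are coprime) and generates the rank-one lattice $\Lambda=\{c\in\ZZ^{d+1}:\sum_i c_i\tau_i=0\}$ of integer relations among the $\tau_i$.

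Granting this, the combined sequence is obtained by a single \emph{cyclic detector}. Since $\mathbf{m}$ is primitive, I choose integers $k_0,\dots,k_d$ with $\sum_i m_ik_i=1$ and attach to $\tau_i$ the group component $k_ig\in G$. I then let $\ss$ over $G\times\llb-m,m\rrb^d$ consist, for each $i$, of exactly $n\,m_i$ copies of the element $(k_ig,\tau_i)$; its length is $n\sum_i m_i=n\ell$, as desired. The total sum vanishes in both coordinates: the $\ZZ^d$-component is $n\sum_i m_i\tau_i=0$, while the $G$-component is $n\bigl(\sum_i m_ik_i\bigr)g=ng=0$.

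For minimality I consider a sub-multiset of $\ss$; it is prescribed by integers $c_i$ with $0\le c_i\le n m_i$, and it is zero-sum precisely when $\sum_i c_i\tau_i=0$ in $\ZZ^d$ and $\bigl(\sum_i c_ik_i\bigr)g=0$ in $G$. The first equation says $c\in\Lambda=\ZZ\mathbf{m}$, hence $c=s\mathbf{m}$ for some integer $s$, and the bounds $0\le sm_i\le nm_i$ then force $s\in\{0,1,\dots,n\}$. Substituting into the second equation, the $G$-component equals $s\bigl(\sum_i m_ik_i\bigr)g=sg$, which vanishes if and only if $n\mid s$, i.e. $s\in\{0,n\}$. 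Thus the only zero-sum sub-multisets are the empty one ($s=0$) and all of $\ss$ ($s=n$), so $\ss$ is minimal of length $n\ell=\DD(G)(2m-1+\delta_m)^d$, which is the claim.

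The heart of the argument, and the step I expect to be the main obstacle, is the structural claim of the first paragraph: that the extremal sequence for Theorem \ref{th:main_3}(iii) can be taken simplicial, i.e. carried by $d+1$ box points that span $\RR^d$ and admit a strictly positive primitive integer relation of total weight exactly $(2m-1+\delta_m)^d$. For $d=1$ this is already visible in Corollary \ref{cor:3} (e.g. $m^{m-1}\cdot(-(m-1))^m$, with relation vector $(m-1,m)$), and small cases such as $(1,1)^4\cdot(0,-2)^3\cdot(-2,1)^2$ over $\llb-2,2\rrb^2$ show the phenomenon persisting in higher dimension; the task is to confirm that the general construction underlying Theorem \ref{th:main_3} is of this form (or to replace it by one that is). Once the simplicial model is secured, the detector argument above is purely mechanical, and it is precisely the primitivity of $\mathbf{m}$ that lets the group components be tuned so that one full copy of $\tt$ sums to a generator of $G$ — the coupling that produces the supermultiplicative bound.
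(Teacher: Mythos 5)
Your proposal is correct and follows essentially the same route as the paper: there too one repeats the extremal box sequence $n=|G|$ times, attaches group components $w_jg$ to its $d+1$ support points via a B\'ezout relation $\sum_j \alpha_j w_j=1$ coming from the coprimality of the multiplicities, and detects minimality through the group coordinate. The structural input you flag as the main obstacle is precisely what the paper has already secured in Lemma \ref{multiplicity} (the $d+1$ support points carry coprime multiplicities) and Lemma \ref{ssdj} (the only non-empty zero-sum subsequences of $\ss_d^{n}$ are the powers $\ss_d^{j}$), so your ``simplicial'' hypothesis holds for the construction of Theorem \ref{th:main_3} and the argument goes through.
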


In general, both theorems are sharp, as shown by our final corollary which follows in an immediate way
from Theorems \ref{th:main_4} and \ref{th5} and Corollary \ref{cor:2}.

\begin{corollary}
\label{cor:4}
Let $m$ be a positive integer and let $G$ be a cyclic group, then
$$
\DD(G \times \llb -m, m \rrb ) =  \DD(G)\ \DD (\llb -m, m \rrb).
$$
\end{corollary}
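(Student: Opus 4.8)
The plan is to establish the two inequalities $\DD(G \times \llb -m, m \rrb) \le \DD(G)\,\DD(\llb -m, m\rrb)$ and $\DD(G \times \llb -m, m \rrb) \ge \DD(G)\,\DD(\llb -m, m\rrb)$ separately, each by specializing one of the two multiplicativity results already proved, and then to reconcile the resulting quantities using the explicit value of $\DD(\llb -m, m\rrb)$ given by Corollary \ref{cor:2}. The one arithmetic point to keep in view throughout is the identity $\DD(\llb -m, m\rrb) = 2m - 1 + \delta_m$, valid for every positive integer $m$: for $m = 1$ one has $\delta_1 = 1$ and Corollary \ref{cor:2} gives $\DD(\llb -1, 1\rrb) = 2 = 2\cdot 1 - 1 + 1$, while for $m \ge 2$ one has $\delta_m = 0$ and $\DD(\llb -m, m\rrb) = 2m - 1$.

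For the lower bound I would apply Theorem \ref{th5} in the special case $d = 1$, which reads $\DD(G \times \llb -m, m\rrb) \ge \DD(G)(2m - 1 + \delta_m)$. By the identity just recorded, the right-hand side is precisely $\DD(G)\,\DD(\llb -m, m\rrb)$, so this delivers the $\ge$ half at once and for every cyclic group $G$, finite or infinite.

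For the upper bound I would invoke Theorem \ref{th:main_4} with ambient group $H = \ZZ$ and the set $X = \llb -m, m\rrb$, a finite subset of $\ZZ$. As long as $G$ is finite, the theorem yields $\DD(G \times \llb -m, m\rrb) \le \DD(G)\,\DD(\llb -m, m\rrb)$, and combining this with the lower bound of the previous paragraph settles the finite case.

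The only subtlety, and the sole place where the argument requires a separate word, is the finiteness hypothesis demanded by Theorem \ref{th:main_4}, which excludes the infinite cyclic group $G \cong \ZZ$. In that case $\DD(G) = +\infty$ by \eqref{cyclic}, while $\DD(\llb -m, m\rrb) \ge 2 > 0$, so the right-hand product is $+\infty$; but the lower bound coming from Theorem \ref{th5} already forces $\DD(G \times \llb -m, m\rrb) \ge \DD(G)\,\DD(\llb -m, m\rrb) = +\infty$, and the two sides agree in the extended sense. Hence there is no genuine obstacle beyond this bookkeeping: all the substance has been absorbed into the two cited theorems, and the corollary is their immediate consequence.
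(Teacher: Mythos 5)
Your proof is correct and follows exactly the route the paper intends: the upper bound comes from Theorem \ref{th:main_4}, the lower bound from Theorem \ref{th5} with $d=1$, and the two are reconciled via the identity $\DD(\llb -m,m\rrb)=2m-1+\delta_m$ supplied by Corollary \ref{cor:2}. Your explicit handling of the infinite cyclic case, where Theorem \ref{th:main_4} is unavailable but both sides are $+\infty$ by the lower bound, is a detail the paper leaves implicit.
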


The plan of the paper is as follows. In Section \ref{sec:preliminary_lemmas}, we establish a few lemmas
of general interest and which will be useful in the other parts of the article.
In Section \ref{secThm1}, we prove Theorem \ref{th:main_1} and Corollaries \ref{cor:2} and \ref{cor:bertrand}.
Section \ref{section-inverse} contains the proofs of the inverse results, namely Theorem \ref{th:main_2} and
its Corollary \ref{cor:3} and of Theorem \ref{th:inverse_2m-2}. Finally the proofs of Theorems \ref{barbie} and \ref{th:main_3}
are presented in Section \ref{avtder}, while Section \ref{der} contains the proofs of our final Theorems \ref{th:main_4} and \ref{th5}.

\section{Preliminary lemmas}
\label{sec:preliminary_lemmas}

In this section, we collect a few lemmas that will be used later to prove our main results.
We start with the following elementary lemma, the proof of which is immediate (and hence omitted).

\begin{lemma}
\label{lem:trivial}
Let $\ss = x_1 \cdots x_n$ be a non-empty minimal zero-sum sequence of an abelian group $\GG$. Then, we have:
\begin{enumerate}[label={\rm (\roman{*})}]
\item\label{lem:trivial:item_0} the sequence $- \ss = (-x_1) \cdots (-x_n)$ is itself
a non-empty minimal zero-sum sequence of $\GG$,
\item\label{lem:trivial:item_ii} $0 \in \ss$ if and only if $n = 1$,
\item\label{lem:trivial:item_iii} the elements $x$ and $-x$ are both in $\ss$ for some $x \in G\setminus \{0\}$
if and only if $n = 2$.
\end{enumerate}
\end{lemma}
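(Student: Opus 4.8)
The plan is to derive all three items directly from the two defining properties of a minimal zero-sum sequence $\ss = x_1 \cdots x_n$, namely $\sum_{i=1}^n x_i = 0$ together with $\sum_{i \in I} x_i \ne 0$ for every non-empty proper subset $I$ of $\{1, \ldots, n\}$; no further machinery is required, so the whole argument reduces to choosing, in each case, the appropriate subset $I$. For item (i) I would invoke that $g \mapsto -g$ is an endomorphism of $\GG$, whence $\sum_{i \in I}(-x_i) = -\sum_{i \in I} x_i$ for all $I$. Taking $I = \{1, \ldots, n\}$ gives that $-\ss$ is a zero-sum sequence; for non-empty proper $I$ the right-hand side is non-zero by minimality of $\ss$, so the left-hand side is too, and $-\ss$ is minimal. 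Non-emptiness is immediate from $\|{-\ss}\| = n \ge 1$.

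For item (ii), if $n = 1$ then the zero-sum condition reads $x_1 = 0$, so $0 \in \ss$; conversely, if some $x_j = 0$ and we had $n \ge 2$, then $I = \{j\}$ would be a non-empty proper subset with $\sum_{i \in I} x_i = 0$, contradicting minimality, so $n = 1$. Item (ii) then feeds directly into the forward direction of item (iii): if $n = 2$, the zero-sum condition forces $x_2 = -x_1$, and $x_1 \ne 0$ since otherwise the singleton $\{1\}$ would violate minimality, so $x := x_1$ and $-x = x_2$ are both in $\ss$.

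For the converse in item (iii) I would argue that if $x$ and $-x$ occur at two distinct positions $i \ne j$ of $\ss$ with $x \ne 0$, then $I = \{i, j\}$ is non-empty with $\sum_{k \in I} x_k = 0$; minimality forbids $I$ from being proper, forcing $I = \{1, \ldots, n\}$ and hence $n = 2$. The one point deserving care --- and the only obstacle I anticipate --- is the precise reading of ``$x$ and $-x$ are both in $\ss$'' in this converse: it must mean two \emph{distinct} positions $i \ne j$ with $x_i = x$ and $x_j = -x$. This is automatic when $2x \ne 0$, but when $x$ has order two (so $x = -x$) it amounts to demanding that $x$ occur with multiplicity at least two. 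Under the looser reading the statement would be false --- for instance the three non-zero elements $(1,0)$, $(0,1)$, $(1,1)$ of $C_2 \oplus C_2$ sum to $0$ and form a minimal zero-sum sequence of length three, each equal to its own negative --- whereas under the distinct-positions reading the argument above applies verbatim.
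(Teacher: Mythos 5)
Your proof is correct and is precisely the ``immediate'' argument the paper has in mind when it omits the proof of this lemma: in each item one exhibits the relevant subset $I$ and invokes minimality. Your side remark about elements of order two is a fair observation about how the phrase ``$x$ and $-x$ are both in $\ss$'' must be read (as the subsequence $x \cdot (-x)$ dividing $\ss$, i.e.\ two distinct positions), but it never causes trouble in the paper, since every application of item (iii) takes place in a power of $\ZZb$, where $0$ is the only element equal to its own negative.
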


The next lemma gives some elementary properties of the function $\DD$.
It turns out that it is an even and non-decreasing function.
As is usual, we shall denote
$$
-X = \{-x \text{ for } x \in X\}.
$$

\begin{lemma}
\label{lem:monotony}
Let $G$ be an abelian group.
If $X \subseteq Y \subseteq G$, then
$\DD(X) \le \DD(Y)$ and $\DD(-X) = \DD(X)$.
Moreover, $\DD(\llb -m, m \rrb) = \DD( \llb -(m-1), m \rrb)$ for every integer $m \ge 2$.
\end{lemma}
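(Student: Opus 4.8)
The plan is to dispatch the three assertions in increasing order of difficulty. The monotonicity $\DD(X)\le\DD(Y)$ for $X\subseteq Y$ is essentially formal: since $\AAc(X)=\AAc(G)\cap\BBc(X)$ and $\BBc(X)\subseteq\BBc(Y)$, every minimal zero-sum sequence over $X$ is also one over $Y$, so the supremum of lengths defining $\DD(X)$ is taken over a subfamily of the one defining $\DD(Y)$. For the evenness, I would invoke Lemma~\ref{lem:trivial}\ref{lem:trivial:item_0}: the map $\ss\mapsto-\ss$ carries $\AAc(X)$ into $\AAc(-X)$, preserves length, and is its own inverse once $X$ is replaced by $-X$; thus it is a length-preserving bijection between $\AAc(X)$ and $\AAc(-X)$, and taking suprema of lengths yields $\DD(-X)=\DD(X)$.

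The substantial part is the last identity. Write $I=\llb -m,m\rrb$ and $J=\llb -(m-1),m\rrb$. Since $J\subseteq I$, the bound $\DD(J)\le\DD(I)$ is immediate from the monotonicity just proved, so it remains to establish $\DD(I)\le\DD(J)$, and I would do this at the level of individual sequences. Let $\ss$ be a minimal zero-sum sequence over $I$ of length $n\ge 3$. The crux is Lemma~\ref{lem:trivial}\ref{lem:trivial:item_iii}, which forbids $\ss$ from containing both $m$ and $-m$ once $n\ge 3$. Hence either $-m$ does not occur in $\ss$, in which case all its elements already lie in $\llb -(m-1),m\rrb=J$ and $\ss\in\AAc(J)$; or $m$ does not occur, so all elements lie in $\llb -m,m-1\rrb$, and then by Lemma~\ref{lem:trivial}\ref{lem:trivial:item_0} the negated sequence $-\ss$ is a minimal zero-sum sequence of the same length $n$ with all elements in $\llb -(m-1),m\rrb=J$. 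In either case we obtain a minimal zero-sum sequence over $J$ of length exactly $n$.

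To conclude, I would observe that the above matches every minimal zero-sum sequence over $I$ of length at least $3$ with one over $J$ of the same length, so the supremum of such lengths is at most $\DD(J)$; the remaining lengths $n\le 2$ are harmless because $\DD(J)\ge 2m-1\ge 3$ for $m\ge 2$. This lower bound follows from Theorem~\ref{th:main_1}(iii) applied to $J$, which contains the positive element $m$ and the negative element $-(m-1)$, giving $\chi(J)\ge\frac{m+(m-1)}{\gcd(m,m-1)}=2m-1$. Therefore $\DD(I)\le\DD(J)$, completing the chain of equalities. The only genuine obstacle is the last identity, and within it the single idea that the length-$\ge 3$ constraint rules out the simultaneous presence of $m$ and $-m$, letting the reflection $\ss\mapsto-\ss$ turn the absence of $m$ into membership in $J$; the one technical point not to overlook is absorbing the short sequences by means of the lower bound $\DD(J)\ge 3$.
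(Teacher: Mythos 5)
Your proposal is correct and follows essentially the same route as the paper: monotonicity and evenness are formal, and the key step is that Lemma~\ref{lem:trivial}\ref{lem:trivial:item_iii} forbids $m$ and $-m$ from coexisting in a minimal zero-sum sequence of length at least $3$, after which the reflection $\ss\mapsto-\ss$ places the sequence inside $\llb -(m-1),m\rrb$. The only cosmetic difference is in handling short sequences: the paper exhibits $-m\cdot 1^m$ to show $\DD(\llb -m,m\rrb)>2$, while you bound $\DD(\llb -(m-1),m\rrb)\ge 2m-1$ via the $\chi$ lower bound of Theorem~\ref{th:main_1}(iii) (which is harmless despite the ordering, since that bound rests only on Lemma~\ref{seq} and not on the present lemma).
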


\begin{proof}
The first inequality is immediate. The second one follows from Lemma \ref{lem:trivial} \ref{lem:trivial:item_0}.
As for the third one, the first inequality implies $\DD(\llb -m, m \rrb) \geq  \DD( \llb -(m-1), m \rrb)$.
Now, we notice that for $m \ge 2$, $\DD(\llb -m, m \rrb)>2$
since the sequence $-m \cdot 1^m \in \AAc (\llb -m, m \rrb)$ has length $m+1 \geq 3$.
It follows by Lemma \ref{lem:trivial} \ref{lem:trivial:item_iii} that a minimal zero-sum sequence
of  $\llb -m, m \rrb$ cannot contain both $m$ and $-m$ and, therefore,
up to symmetry, is included in $\llb -m+1, m \rrb$. This proves the third assertion of the lemma.
\end{proof}

Our methods heavily rely on considering partial sums of terms of the sequences we study.
The following lemma is the first result of a series in this direction.

\begin{lemma}
\label{lem:all_distinct}
Let $\ss = x_1 \cdots x_n$ be a non-empty minimal zero-sum sequence of an abelian group $\GG$.
Then, for any permutation $\sigma$ of $\llb 1, n \rrb$ and all $i,j \in \llb 1, n \rrb$, the following holds:
$\sum_{l=1}^i x_{\sigma(l)} \ne \sum_{l=1}^j x_{\sigma(l)}$ if and only if $i \ne j$.
\end{lemma}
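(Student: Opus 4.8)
The plan is to recognise that this lemma is essentially a restatement of the definition of minimality, once one rewrites everything in terms of partial sums. For a fixed permutation $\sigma$ of $\llb 1, n \rrb$, set $P_i := \sum_{l=1}^i x_{\sigma(l)}$ for $i \in \llb 1, n \rrb$. The implication ``$i = j \Rightarrow P_i = P_j$'' is trivial, so the only content of the biconditional is the contrapositive of the reverse direction, namely that the $n$ partial sums $P_1, \ldots, P_n$ are pairwise distinct. Thus I would first reduce the claim to proving this distinctness.

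To establish it, I would argue by contradiction. Suppose $P_i = P_j$ for some indices with $1 \le i < j \le n$. Subtracting, one gets $\sum_{l=i+1}^{j} x_{\sigma(l)} = P_j - P_i = 0$. Since $\sigma$ is a permutation, the indices $\sigma(i+1), \ldots, \sigma(j)$ are pairwise distinct, so
$$
I := \{\sigma(i+1), \ldots, \sigma(j)\}
$$
is a genuine subset of $\llb 1, n \rrb$ of cardinality $j - i$ satisfying $\sum_{k \in I} x_k = 0$. The key bookkeeping is then that $I$ is a \emph{non-empty proper} subset: from $i < j$ we get $j - i \ge 1$, so $I \ne \emptyset$, while from $i \ge 1$ and $j \le n$ we get $j - i \le n - 1$, so $I \ne \llb 1, n \rrb$. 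This directly contradicts the minimality of $\ss$, which forbids any non-empty proper subset of indices from summing to $0$. Hence no such pair $(i,j)$ exists, and the partial sums are all distinct.

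I do not expect any real obstacle here; the argument is a one-step deduction from the definition of minimality, the only care being to verify that the ``window'' $I$ between two coinciding partial sums is both non-empty and proper. The degenerate case $n = 1$ is vacuous, as there are then no two distinct indices to compare (and indeed $x_1 = 0$ by Lemma~\ref{lem:trivial}~\ref{lem:trivial:item_ii}). It is worth emphasising that the consecutive-in-$\sigma$-order sets $I$ are of course not arbitrary subsets of $\llb 1, n \rrb$, but this is immaterial: minimality constrains \emph{all} non-empty proper subsets, so it applies in particular to each such $I$, for every choice of $\sigma$.
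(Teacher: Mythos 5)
Your proof is correct and follows the same route as the paper's: assume two partial sums coincide, subtract to obtain a zero-sum over the consecutive window $\{\sigma(i+1),\ldots,\sigma(j)\}$, and check that this window is a non-empty proper subsequence, contradicting minimality. The extra bookkeeping you supply (verifying $1 \le j-i \le n-1$) is exactly the point the paper's proof relies on, just made explicit.
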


\begin{proof}
Suppose the result is false: there exist a permutation $\sigma$ of $\llb 1, n \rrb$ and distinct indices $i,j \in \llb 1, n \rrb$
such that $\sum_{l=1}^i x_{\sigma(l)} = \sum_{l=1}^j x_{\sigma(l)}$. By symmetry, we can assume $i < j$.
This yields that the non-empty sum $\sum_{l=i+1}^j x_{\sigma(l)} = 0$ that is, $x_{\sigma(i+1)} \cdots x_{\sigma(j)}$
is a proper non-empty zero-sum subsequence of $\ss$, which is impossible by the minimality of $\ss$.
\end{proof}

Here is a useful companion result to the preceding lemma.

\begin{lemma}
\label{refine}
Let $\ss = x_1 \cdots x_n$ be a non-empty minimal zero-sum sequence of length $n \geq 3$ of an abelian group $\GG$.
Then, for any permutation $\sigma$ of $\llb 1, n \rrb$ and any index $i \in \llb 1, n \rrb \setminus  \{ 2 \}$,
the value of $\sum_{l=1}^i x_{\sigma(l)}$ is different from $x_{\sigma(1)}+x_{\sigma(3)}$.
\end{lemma}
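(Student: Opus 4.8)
The plan is to recast the claimed inequality as the nonvanishing of a subsequence sum and then to invoke the minimality of $\ss$ directly, in the same spirit as the proof of Lemma \ref{lem:all_distinct}. Fix a permutation $\sigma$ of $\llb 1, n \rrb$ and an index $i \in \llb 1, n \rrb \setminus \{2\}$. Rather than comparing $\sum_{l=1}^i x_{\sigma(l)}$ with $x_{\sigma(1)} + x_{\sigma(3)}$ head-on, I would examine their difference and show that it cannot be $0$.

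First I would dispose of the boundary value $i = 1$. Here the difference collapses to $x_{\sigma(1)} - (x_{\sigma(1)} + x_{\sigma(3)}) = -x_{\sigma(3)}$, and since $n \geq 3 > 1$, no term of $\ss$ can be zero by Lemma \ref{lem:trivial} \ref{lem:trivial:item_ii}; hence $x_{\sigma(3)} \neq 0$ and the two quantities differ. For the remaining values $i \geq 3$, the point is that $x_{\sigma(1)}$ and $x_{\sigma(3)}$ cancel when one subtracts $x_{\sigma(1)} + x_{\sigma(3)}$ from the partial sum, leaving
$$
\sum_{l=1}^i x_{\sigma(l)} - \big( x_{\sigma(1)} + x_{\sigma(3)} \big) = x_{\sigma(2)} + \sum_{l=4}^i x_{\sigma(l)}.
$$
This right-hand side is the sum of the subsequence of $\ss$ indexed by $I = \{ \sigma(2) \} \cup \{ \sigma(l) : 4 \le l \le i \}$, where the second set is empty when $i = 3$. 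I would then check that $I$ is eligible for minimality: it is non-empty since $\sigma(2) \in I$, and it is a proper subset of $\llb 1, n \rrb$ because it omits the two distinct indices $\sigma(1)$ and $\sigma(3)$. By the definition of a minimal zero-sum sequence, the associated sum is therefore nonzero, so the displayed difference does not vanish and the claim follows.

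The argument is essentially bookkeeping of which indices survive the cancellation, so I expect no serious obstacle. The only points needing care are the two boundary situations: the case $i = 1$, treated separately above, and the case $i = 3$, where $I$ shrinks to the singleton $\{ \sigma(2) \}$ and one must again use that no element of $\ss$ equals zero (equivalently, minimality applied to a one-element subset). Verifying that $I$ is simultaneously non-empty and proper is precisely where the hypotheses $n \geq 3$ and the exclusion of the value $i = 2$ enter.
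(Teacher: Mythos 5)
Your proposal is correct and follows essentially the same route as the paper's own proof: the case $i=1$ is settled by noting $x_{\sigma(3)} \neq 0$ (Lemma \ref{lem:trivial} \ref{lem:trivial:item_ii}), and for $i \geq 3$ the equality would force $x_{\sigma(2)} + \sum_{l=4}^i x_{\sigma(l)} = 0$, contradicting minimality since the corresponding index set is non-empty and proper. Your extra bookkeeping on why that index set is proper is a welcome clarification but not a different argument.
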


\begin{proof}
If $x_{\sigma(1)} = x_{\sigma(1)}+x_{\sigma(3)}$, then $x_{\sigma(3)}=0$, a contradiction
by Lemma \ref{lem:trivial} \ref{lem:trivial:item_ii} since $n>1$: this solves the case $i=1$; while, if for some $i \geq 3$,
$$
\sum_{l=1}^i x_{\sigma(l)} = x_{\sigma(1)}+x_{\sigma(3)}
$$
then
$$
x_{\sigma(2)}+\sum_{l=4}^i x_{\sigma(l)}=0
$$
(if $i=3$, the sum on $l$ on the left-hand side is empty) which contradicts the minimality of $\ss$.
\end{proof}

The two preceding lemmas will be used under the form of the following counting lemma
which will be key in several proofs.

\begin{lemma}
\label{lem:pigeon}
Let $\ss = x_1 \cdots x_n$ be a non-empty minimal zero-sum sequence of an abelian group $\GG$.
We assume that there exist a set $X$ and a permutation $\sigma$ of $\llb 1,n \rrb$ such that
for any $i \in \llb 1, n \rrb$, the partial sum $\sum_{l=1}^i x_{\sigma(l)}$ belongs to $X$.
Then
\begin{enumerate}[label={\rm (\roman{*})}]
\item\label{pig1} the inequality $n \le |X|$ holds true,
\item\label{pig2} if we assume additionally that $n \ge 3$, $x_{\sigma (2)} \neq x_{\sigma (3)}$
and $x_{\sigma (2)} + x_{\sigma (3)} \in X$, then $n \le |X|-1$.
\end{enumerate}
\end{lemma}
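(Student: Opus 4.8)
The plan is to read both bounds off the single fact, furnished by Lemma~\ref{lem:all_distinct}, that a minimal zero-sum sequence has pairwise distinct partial sums along \emph{any} ordering. For part~\ref{pig1} I would apply Lemma~\ref{lem:all_distinct} to the given permutation $\sigma$: it shows that the $n$ partial sums $\sum_{l=1}^{i} x_{\sigma(l)}$, with $i \in \llb 1, n \rrb$, are pairwise distinct. As each of them belongs to $X$ by assumption, $X$ has at least $n$ elements, i.e. $n \le |X|$.

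For part~\ref{pig2} the idea is to produce \emph{one further} element of $X$ distinct from the $n$ partial sums above, which upgrades the count of part~\ref{pig1} to $n + 1 \le |X|$, that is $n \le |X| - 1$. The element to adjoin is the one that Lemma~\ref{refine} is designed to control, namely $y := x_{\sigma(1)} + x_{\sigma(3)}$, which is assumed to lie in $X$. Since $n \ge 3$, Lemma~\ref{refine} applies to $\sigma$ and gives immediately that $y \ne \sum_{l=1}^{i} x_{\sigma(l)}$ for every index $i \in \llb 1, n \rrb \setminus \{2\}$. Thus $y$ already avoids all the partial sums except possibly the second one, $\sum_{l=1}^{2} x_{\sigma(l)} = x_{\sigma(1)} + x_{\sigma(2)}$.

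The crux --- and the only place where the supplementary hypotheses are actually consumed --- is the comparison at this exceptional index $i = 2$, which Lemma~\ref{refine} deliberately leaves aside. Here $x_{\sigma(1)} + x_{\sigma(2)} = y = x_{\sigma(1)} + x_{\sigma(3)}$ would force $x_{\sigma(2)} = x_{\sigma(3)}$, and this is precisely what the standing assumption $x_{\sigma(2)} \ne x_{\sigma(3)}$ forbids. Hence $y$ differs from all $n$ partial sums; since $y \in X$ while those partial sums are $n$ distinct members of $X$, we obtain $n + 1$ distinct elements of $X$ and conclude $n \le |X| - 1$ by pigeonhole, exactly as in part~\ref{pig1}. (One should also record in passing that the degenerate case $i = 1$ of Lemma~\ref{refine} is harmless because $x_{\sigma(3)} \ne 0$, which holds by Lemma~\ref{lem:trivial}~\ref{lem:trivial:item_ii} since $n \ge 3 > 1$.) I expect this comparison at the excluded index to be the main obstacle: everything else is a direct transcription of Lemmas~\ref{lem:all_distinct} and~\ref{refine}, whereas closing the $i = 2$ gap is exactly the task for which the hypothesis $x_{\sigma(2)} \ne x_{\sigma(3)}$ is tailored.
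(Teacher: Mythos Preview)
Your argument is correct and coincides with the paper's own proof: part~\ref{pig1} via Lemma~\ref{lem:all_distinct}, and part~\ref{pig2} by adjoining $x_{\sigma(1)}+x_{\sigma(3)}$ through Lemma~\ref{refine} and closing the excluded case $i=2$ with the hypothesis $x_{\sigma(2)}\ne x_{\sigma(3)}$. One caveat worth flagging: the hypothesis ``$x_{\sigma(2)}+x_{\sigma(3)}\in X$'' in the statement is a typo for ``$x_{\sigma(1)}+x_{\sigma(3)}\in X$'' --- both your proof and the paper's own proof require the latter, and the sole application of part~\ref{pig2} (inside Lemma~\ref{neg}) verifies precisely that $x_{\sigma(1)}+x_{\sigma(3)}\in X$.
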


\begin{proof}
By Lemma \ref{lem:all_distinct}, all the partial sums $\sum_{l=1}^i x_{\sigma(l)}$ ($1 \le i \le n$)
must be pairwise distinct. Since, by assumption, all these elements belong to $X$, this implies $n \le |X|$.

If $n \ge 3$, we may additionally apply Lemma \ref{refine}. Since, by assumption,
$x_{\sigma (2)} \neq x_{\sigma (3)}$, we obtain that for $i \in \llb 1, n \rrb$,
the partial sums $\sum_{l=1}^i x_{\sigma(l)}$  are pairwise distinct
and different from $x_{\sigma (1)} + x_{\sigma (3)}$. We obtain
$$
\left| \left\{  \sum_{l=1}^i x_{\sigma(l)}, \text{ for } 1 \leq i \leq n    \right\} \cup \{ x_{\sigma (1)} + x_{\sigma (3)} \} \right|=n+1.
$$
Since all the $n+1$ elements appearing in the left-hand side of this equality are in $X$, the result follows.
\end{proof}

A classical consequence of Lemma \ref{lem:pigeon} is the well-known fact that if $\GG$ is
a finite abelian group, then $\DD (G) \leq | G |$ (this bound is sharp, as is seen in \eqref{cyclic}).

Now we introduce a technical definition. We shall say that a triple $(\ss, k, \sigma)$ is {\em nyctalopic}
if $\ss = x_1 \cdots x_n$ is a minimal zero-sum sequence of $\BBc(\ZZ)$ of length $n \ge 2$,
$k$ is an integer in the range $1 \leq k \leq n$ and $\sigma$ is an injective function defined on
$\llb 1, k \rrb$ and taking its values in $\llb 1, n \rrb$ such that
the following property holds: for any $i \in \llb 2, k  \rrb$, one has
$$
x_{\sigma(i)}  \sum_{l=1}^{i-1} x_{\sigma(l)}< 0.
$$
When $k=n$, if there is no risk of confusion (that is, if which $\ss$ is involved is clear from the context),
we will simply say that $\sigma$ is a nyctalopic permutation.

Nyctalopic triples $(\ss, k, \sigma)$ have nice properties which justify their introduction.
The following lemma of an algorithmic nature will be very useful in what follows.

\begin{lemma}
\label{prolong}
Let $X$ be a finite subset of $\ZZ$.
Let $\ss = x_1 \cdots x_n$ be a minimal zero-sum sequence of $\BBc(X)$ of length $n \ge 2$.
Let $k$ be an integer, $1\leq k \leq n$, and $\sigma$ be an injective function defined on $\llb 1,k \rrb$ and
taking its values in $\llb 1, n \rrb$ such that the triple $(\ss, k, \sigma)$ is nyctalopic.
Then, one can extend $\sigma$ to a nyctalopic permutation of $\llb 1, n \rrb$.
\end{lemma}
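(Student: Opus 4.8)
The plan is to argue by a greedy, one-index-at-a-time extension: I will show that whenever $\sigma$ is defined on $\llb 1, k \rrb$ with $(\ss, k, \sigma)$ nyctalopic and $k < n$, one can always select an image $\sigma(k+1)$ among the still-unused indices so that $(\ss, k+1, \sigma)$ is again nyctalopic. Iterating this step $n-k$ times then yields a nyctalopic permutation of $\llb 1, n \rrb$, so it suffices to treat a single extension step. A clean way to package the iteration is a downward induction on $n-k$, the base case $k = n$ being vacuous.

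So fix such a $\sigma$ with $k < n$, write $S = \sum_{l=1}^{k} x_{\sigma(l)}$ for the current partial sum, and let $R = \llb 1, n \rrb \setminus \sigma(\llb 1, k \rrb)$ be the (non-empty) set of unused indices. To extend $\sigma$ by setting $\sigma(k+1) = j$ for some $j \in R$ while preserving the nyctalopic property, I need exactly that $x_j\, S < 0$, i.e.\ that some unused $x_j$ has sign opposite to $S$; the conditions for the indices $i \le k$ are untouched, and $\sigma$ remains injective precisely because $j$ was unused.

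The key observation is that $S \neq 0$. Indeed, $\sigma(\llb 1, k \rrb)$ is a set of indices of size $k$ with $1 \le k < n$, so if its associated partial sum $S$ vanished, then $\ss$ would contain a non-empty proper zero-sum subsequence, contradicting its minimality. Now, since $\ss$ is a zero-sum sequence, $\sum_{l=1}^{n} x_l = 0$, whence
$$
\sum_{j \in R} x_j = -S,
$$
a non-zero integer whose sign is opposite to that of $S$. Since a sum of integers can be negative (resp.\ positive) only if at least one summand is negative (resp.\ positive)---and none of the $x_j$ is zero by Lemma \ref{lem:trivial} \ref{lem:trivial:item_ii}---there is at least one $j \in R$ with $x_j$ of sign opposite to $S$, and any such $j$ satisfies $x_j\, S < 0$. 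Picking any one of them completes the extension step.

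I expect there to be no genuinely hard part: the whole argument rests on the two elementary facts that minimality forbids the intermediate partial sum $S$ from being $0$, and that the vanishing of the total sum forces the complementary block $\sum_{j \in R} x_j$ to carry the opposite sign, which is exactly what guarantees the greedy choice never gets stuck. The only point requiring a little care is to state the sign dichotomy uniformly (the cases $S > 0$ and $S < 0$ being symmetric), which the formulation above handles in one stroke.
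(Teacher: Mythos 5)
Your proposal is correct and follows essentially the same argument as the paper: a greedy/inductive extension step, using minimality to see that the current partial sum $S$ is non-zero and the zero-sum property to find an unused element of opposite sign. The paper's proof is identical in substance (your extra appeal to Lemma \ref{lem:trivial} \ref{lem:trivial:item_ii} is not even needed, since a non-zero sum automatically has a summand of its own sign).
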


\begin{proof}
We proceed by induction. By assumption, $(\ss, k, \sigma)$ is nyctalopic.

Assume now that for some integer $i \in \llb k, n-1 \rrb$, $\sigma$ has been extended so that the values of $\sigma (k+1),\dots, \sigma (i)$ are determined
in such a way that $(\ss, i, \sigma)$ is nyctalopic. It is immediate to check that
$$
 \sum_{l=1}^{i} x_{\sigma(l)} \neq 0
$$
since otherwise $\ss$ would not be a minimal zero-sum sequence in view of $i<n$.
Since $\ss$ sums to zero there should be at least one integer
$j \not\in \{ \sigma (l) \text{ for } 1 \leq l \leq i\}$ such that $x_j$ has a  sign opposite
to the one of $\sum_{l=1}^{i} x_{\sigma(l)}$. We fix one of these integers $j$ arbitrarily.
Then we extend $\sigma$ by defining
$$
\sigma (i+1) =j
$$
so that, by construction, $(\ss, i+1, \sigma)$ is nyctalopic.
\end{proof}

Here is the central property of nyctalopic triples we use in what follows.

\begin{lemma}
\label{prop-nyctalopic}
Let $\ss$ be a minimal zero-sum sequence of $\BBc(X)$ of length $n \ge 2$.
Let $\sigma$ be a nyctalopic permutation of $\llb 1, n \rrb$.
Then, for any $i \in \llb 1, n \rrb$,
$$
\min X \leq  \sum_{l=1}^{i} x_{\sigma(l)} \leq \max X.
$$
Moreover, if $x_{\sigma (1)} \neq \max X$, the inequality on the right is strict
while, if  $x_{\sigma (1)} \neq \min X$, the inequality on the left is strict.
\end{lemma}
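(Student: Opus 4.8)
The plan is to argue by induction on $i$, exploiting the defining alternation of signs of a nyctalopic permutation. Write $S_i = \sum_{l=1}^i x_{\sigma(l)}$ for the $i$-th partial sum, so that $S_i = S_{i-1} + x_{\sigma(i)}$ and, by the nyctalopic hypothesis, $x_{\sigma(i)} S_{i-1} < 0$ for every $i \in \llb 2, n \rrb$; in particular both $x_{\sigma(i)}$ and $S_{i-1}$ are non-zero and of opposite sign. Since every term $x_{\sigma(l)}$ lies in $X$, one has $\min X \le x_{\sigma(l)} \le \max X$ throughout, which settles the base case $i=1$, where $S_1 = x_{\sigma(1)} \in X$.

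For the inductive step I would assume $\min X \le S_{i-1} \le \max X$ and split according to the sign of $S_{i-1}$. If $S_{i-1} > 0$, then $x_{\sigma(i)} < 0$, whence on the one hand $S_i = S_{i-1} + x_{\sigma(i)} < S_{i-1} \le \max X$, and on the other hand $S_i > x_{\sigma(i)} \ge \min X$ because $S_{i-1} > 0$; the case $S_{i-1} < 0$ is entirely symmetric, forcing $x_{\sigma(i)} > 0$ and squeezing $\min X \le S_{i-1} < S_i < x_{\sigma(i)} \le \max X$. In either case one obtains the strict chain $\min X < S_i < \max X$. This simultaneously establishes the claimed weak inequalities $\min X \le S_i \le \max X$ for all $i$ and shows that, for every $i \ge 2$, both inequalities are in fact strict, independently of the value of $x_{\sigma(1)}$.

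It then only remains to treat the \emph{moreover} clause, and the point is that the sole index at which a boundary value can be attained is $i=1$: since $S_1 = x_{\sigma(1)}$, the right inequality at $i=1$ is strict precisely when $x_{\sigma(1)} \ne \max X$, and the left inequality at $i=1$ is strict precisely when $x_{\sigma(1)} \ne \min X$. Combining this with the strictness already obtained for all $i \ge 2$ yields both assertions. I do not expect a genuine obstacle here; the single idea that makes the argument work is that the sign-alternation property keeps the partial sum trapped, in the sense that each new term pulls $S_i$ back towards $0$ relative to $S_{i-1}$, so $S_i$ always lies strictly between $S_{i-1}$ and $x_{\sigma(i)}$, both of which already belong to the interval $[\min X, \max X]$. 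Recognising that this trapping automatically forces strict inequalities from the second step onwards is the only subtlety worth flagging.
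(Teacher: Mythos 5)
Your proof is correct and follows essentially the same route as the paper's: induction on $i$, a case split on the sign of the previous partial sum, and the observation that the sign-alternation traps each new partial sum strictly between $S_{i-1}$ and $x_{\sigma(i)}$, so that equality with $\min X$ or $\max X$ can only occur at $i=1$. The only cosmetic difference is that you deduce $S_{i-1}\neq 0$ directly from the nyctalopic inequality and get strictness from $S_{i-1}>0$ (resp.\ $<0$), whereas the paper invokes minimality and the integrality bound $x_{\sigma(i+1)}\le -1$; both are valid.
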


\begin{proof}
Notice first that $n \geq 2$ implies $\min X < 0 < \max X$, as follows from Theorem \ref{th:main_1} (i) and (ii).

The assertion of Lemma \ref{prop-nyctalopic} is proved by induction, the lemma being trivial for $i=1$.
Suppose it is true for some $i \in \llb 1, n-1 \rrb$, we thus have
$$
\min X \leq  \sum_{l=1}^{i} x_{\sigma(l)} \leq \max X.
$$
By minimality, this sum is also non-zero since $i<n$.
Suppose that $\sum_{l=1}^{i} x_{\sigma(l)} >0$ then by nyctalopia,
one has $x_{\sigma (i+1)} <0$ that is, $\min X \leq x_{\sigma (i+1)}  \leq -1$
and thus
$$
 \min X < 1 +\min X \leq  \sum_{l=1}^{i} x_{\sigma(l)} +x_{\sigma (i+1)} \leq \max X -1 < \max X.
$$
The case $\sum_{l=1}^{i} x_{\sigma(l)} <0$ is treated in a symmetric way.
\end{proof}

\section{Proof of Theorem  \ref{th:main_1} and its corollaries}
\label{secThm1}

We start with a lemma.

\begin{lemma}
\label{seq}
Let $x$ and $y$ be integers such that $xy < 0$ and let $X= \{ x,y \}$. Then
\begin{enumerate}[label={\rm (\roman{*})}]
\item\label{seq1} the set $\AAc ( X)$ has a unique element, $\xx =x^{a}\cdot y^{b}$
with $a =|y|/\gcd(x,y)$ and $b = |x|/\gcd(x,y)$,
\item\label{seq2} the set $\BBc (X)$ is equal to $\{ \xx^j \text{ for } j \in \NN \}$.
\end{enumerate}
\end{lemma}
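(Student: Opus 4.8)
The plan is to reduce everything to the elementary observation that a word over the two-element set $X=\{x,y\}$ is completely described by the multiplicities $p$ and $q$ of $x$ and $y$, so that every element of $\FFc(X)$ has the form $x^p \cdot y^q$ with $(p,q)\in\NN^2$, and such a word is a non-empty zero-sum sequence exactly when $px+qy=0$ and $(p,q)\neq(0,0)$. Thus both parts come down to solving this single linear Diophantine equation in non-negative integers, and no appeal to the partial-sum machinery of Section~\ref{sec:preliminary_lemmas} is needed. I would first record that $\xx$ itself lies in $\BBc(X)$: writing $g=\gcd(x,y)$, the sum of $\xx=x^{|y|/g}\cdot y^{|x|/g}$ equals $\frac{1}{g}\big(|y|\,x+|x|\,y\big)$, which vanishes precisely because $xy<0$ forces $x$ and $y$ to carry opposite signs.

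Next I would solve $px+qy=0$ completely. Dividing by $g$ and setting $\alpha=|x|/g$, $\beta=|y|/g$ (so $\gcd(\alpha,\beta)=1$, and since $x$ and $y$ have opposite signs the equation rearranges to $p\alpha=q\beta$ with non-negative coefficients), coprimality gives $\alpha\mid q$ and $\beta\mid p$; writing $q=\alpha t$ then forces $p=\beta t$. Hence the non-negative solutions are exactly $(p,q)=t\,(\beta,\alpha)$ for $t\in\NN$, that is $x^p\cdot y^q=\xx^{\,t}$. Discarding $t=0$ (the empty word, which is the identity of $\BBc(X)$) this shows that the non-empty zero-sum sequences over $X$ are precisely the powers $\xx^{\,t}$ with $t\ge 1$, which is assertion \ref{seq2}.

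Assertion \ref{seq1} then follows at once. On one hand $\xx$ is minimal, since any proper non-empty subsequence is some $x^p\cdot y^q$ with $(p,q)\neq(0,0)$ dominated coordinatewise by $(\beta,\alpha)$ and strictly smaller in at least one coordinate; by the classification above it cannot sum to $0$, because the smallest positive solution already corresponds to $\xx$ itself. On the other hand, for $t\ge 2$ the sequence $\xx^{\,t}$ contains the proper non-empty zero-sum subsequence $\xx$, hence is not minimal; as by \ref{seq2} every minimal zero-sum sequence is of the form $\xx^{\,t}$, the only one is $\xx=\xx^{1}$, giving $\AAc(X)=\{\xx\}$. The one point requiring a little care in the whole argument is the divisibility step pinning the solution set down to the multiples of $(\beta,\alpha)$: one must correctly reduce by $g=\gcd(x,y)$ and keep track of the opposite signs of $x$ and $y$ so that $p\alpha=q\beta$ comes out with non-negative coefficients. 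Everything else is routine.
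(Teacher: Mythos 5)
Your proposal is correct and follows essentially the same route as the paper: both reduce the problem to the linear equation $ax+by=0$ in non-negative integers, divide out $\gcd(x,y)$, and invoke coprimality (Gauss's lemma) to conclude that the solutions are exactly the multiples of $(|y|/\gcd(x,y),\,|x|/\gcd(x,y))$, after which minimality of $\xx$ alone is immediate. The only difference is that you spell out the minimality check in slightly more detail than the paper does; no further changes are needed.
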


\begin{proof}
By definition, the sequence $x^{a}\cdot y^{b}$ is in $\BBc(X)$ if and only if $ax + by = 0$, that is $a|x| = b|y|$.
The preceding equality can be rewritten as
$$
a \frac{|x|}{\gcd(x,y)} = b \frac{|y|}{\gcd(x,y)}.
$$
But $ |x|/\gcd(x,y)$ and $|y|/\gcd(x,y)$ are coprime, therefore Gauss lemma gives the
existence of a non-negative integer $h$ such that $b = h |x|/\gcd(x,y)$ and $a = h |y|/\gcd(x,y)$.
This proves \ref{seq2}.

Among these sequences, only the one corresponding to $h=1$ is minimal (and divides those for $h \geq 1$)
and \ref{seq1} follows.
\end{proof}

Here is the very proof of the Theorem.

\begin{proof}[Proof of Theorem \ref{th:main_1}]
The points (i) and (ii) are immediate. We thus turn directly to (iii).

In order to prove $\chi (X) \leq \DD (X)$, we consider, for all $x,y \in X$ with $xy < 0$,
the sequence $\ss=x^a \cdot y^b$ where
$$
a=\frac{|y|}{\gcd(x,y)}\qquad \text{ and }\qquad  b=\frac{|x|}{\gcd(x,y)}.
$$
By Lemma \ref{seq} \ref{seq1}, this is a minimal zero-sum sequence.
Consequently, we obtain
$$
\frac{|x| + |y|}{\gcd(x,y)} = || \ss || \leq \DD (G),
$$
hence the result, on taking the supremum on the left-hand side.

On another hand, the upper bound $\DD (X) \le \diam(X)$ is trivial if $|X| = + \infty$.
So assume that $X$ is finite, and let $m = -\min X$ and $M = \max X$.
If $\ss = x_1 \cdots x_n \in \AAc (X)$, then $\|\ss\| \ge \chi(X) \ge 2$ by the inequality
we just proved. Define $\sigma (1)=1$. Lemma \ref{prolong} implies
that we can extend $\sigma$ into a nyctalopic permutation of $\llb 1, n \rrb$.
Lemma \ref{prop-nyctalopic} then implies that all the partial sums $x_{\sigma(1)} + \cdots + x_{\sigma(i)}$
(for $1 \leq i \leq n$) belong to either $\llb -m, M-1 \rrb$ or $\llb -(m-1), M \rrb$,
with the result that $n \le M+m = \chi(X)$, in view of Lemma \ref{lem:pigeon} \ref{pig1}.
\end{proof}

We conclude the section with the proof of the two corollaries to Theorem \ref{th:main_1}.

\begin{proof}[Proof of Corollary \ref{cor:2}]
By Corollary \ref{cor:2a}, the claim is trivial if $m = 1$,
while Lemma \ref{lem:monotony} and Corollary \ref{cor:2a} give
$$
\DD (\llb -m, m \rrb)= \DD (\llb -(m-1), m \rrb) = 2m-1
$$
for $m \ge 2$ since in this case $\gcd (m-1,m)=1$.
\end{proof}

For the proof of Corollary \ref{cor:bertrand}, we shall need the symbol $[x]$ for the integral part
(by default) of a real number $x$.

\begin{proof}[Proof of Corollary \ref{cor:bertrand}]
Since Hoheisel \cite{Hoheisel}, we know that for some $\vartheta <1$, when $x$ is large enough,
there is always a prime $p_x$ in the real open interval  $(x - x^\vartheta, x )$.
One can even take $\vartheta=0.525$ \cite{BHP}.

Assume $\min (m,M)=m$ (the other case is analogous). Applying Hoheisel's result, we may find
a prime $p$ in $\llb  m - [m^\vartheta], m \rrb$. Since $p$ cannot divide $M$ and $M-1$ at
the same time, there must exist $\eta =0$ or 1 such that $\gcd (M- \eta , p)=1$.
We infer
$$
p+M-1 \leq p+M -\eta = \frac{p+ M-\eta }{\gcd(p,M- \eta )} \le \DD(\llb -p, M-\eta \rrb) \le  \DD(\llb -m, M \rrb) \le m + M,
$$
where we have used the coprimality of $p$ and $M- \eta$, Corollary \ref{cor:2a} and
the non-decreasingness of $\DD$ given by Lemma \ref{lem:monotony}. The result follows since
$$
p+M - 1 = m+M + O(m^{\vartheta }+1) = m+M+ o(m).
$$
\end{proof}

\section{Proofs of the inverse theorems and their corollaries}
\label{section-inverse}

We start with the proof of Theorem \ref{th:main_2}.

\begin{proof}[Proof of Theorem \ref{th:main_2}]
That the condition of the Theorem is sufficient follows from Lemma \ref{seq} \ref{seq1}. We now investigate
its necessity.

Suppose that $\ss$ contains an element $x_i$ different from both $-m$ and $M$. Define $\sigma (1)=i$ and
apply Lemma \ref{prolong} in order to extend $\sigma$ into a nyctalopic permutation.
By Lemma \ref{prop-nyctalopic}, we obtain that the partial sums $x_{\sigma(1)} + \cdots + x_{\sigma(j)}$ all
belong to $\llb -(m-1), M-1 \rrb$. This in turn implies $\|\ss\| \le M+m-1$ by Lemma \ref{lem:pigeon} \ref{pig1},
which is a contradiction.

It follows that $\ss$ is of the form $(-m)^a \cdot M^b$ for some positive integers $a$ and $b$, that is,
$\ss \in \BBc ( \{ -m,M \} )$.
By Lemma \ref{seq} \ref{seq1}, the minimality of $\ss$ implies
$$
a = \frac{M}{\gcd(M,m)}\quad \text{ and }\quad  b=\frac{m}{\gcd(M,m)}.
$$
From the assumption and this, we deduce that
$$
M+m= || \ss || =a+b =  \frac{M+m}{\gcd(M,m)}
$$
and $\gcd (M,m)=1$ follows.
\end{proof}

The proof of its corollary is now easy.

\begin{proof}[Proof of Corollary \ref{cor:3}]
By Lemma \ref{lem:trivial} \ref{lem:trivial:item_iii}, since $2m-1>2$, $\ss$ cannot contain both $m$ and $-m$.
Assume that $\ss$ does not contain $-m$, then it belongs to $\BBc(\llb -(m-1), m \rrb)$ and we apply Theorem \ref{th:main_2},
which gives the result. The case where $\ss$ does not contain $m$ is analogous.
\end{proof}

We now come to the second inverse result. It turns out that its proof is by far more intricate than the preceding one.

\begin{proof}[Proof of Theorem \ref{th:inverse_2m-2}]

In this proof, we will distinguish two cases (cases (i) and (ii)), the first one being very simple.
The second case will use two internal lemmas (Lemmas \ref{neg} and \ref{pos} below).

Since $\DD (\llb -(m-1),m-1 \rrb) = 2m-3$ by Corollary \ref{cor:2}, we can assume by symmetry
and point \ref{lem:trivial:item_iii} of Lemma \ref{lem:trivial} that $m \in \ss$ and $-m \notin \ss$.
In other words $\ss \in \BBc(\llb -(m-1), m \rrb)$.

We distinguish two cases, the first one being almost immediate.
\smallskip

(i) If $-(m-1) \notin \ss$, then $\ss \in \BBc(\llb -(m-2), m \rrb )$.
It follows from Theorem \ref{th:main_2} that $\ss$ is the sequence
$m^{m-2} \cdot (-(m-2))^m$ and $\gcd(m-2,m) = 1$, i.e. $m$ is odd.
\smallskip

(ii) If $-(m-1) \in \ss$, then point \ref{lem:trivial:item_iii} of Lemma \ref{lem:trivial} implies that $m-1 \notin \ss$.
Up to reordering the elements of $\ss$, we may therefore assume from now on that
$$
x_1 = m\qquad \text{ and }\qquad x_2=-(m-1).
$$

\begin{lemma}
\label{neg}
If, for some $i \in  \llb 3, n \rrb$, $x_i$ is negative then $x_i = -(m-1)$.
\end{lemma}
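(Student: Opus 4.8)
The plan is to argue by contradiction, using the counting machinery of Lemma \ref{lem:pigeon} in its sharpened form \ref{pig2}. Suppose that for some $i \in \llb 3, n \rrb$ the term $x_i$ is negative but $x_i \neq -(m-1)$. Since we are in case (ii), where $\ss \in \BBc(\llb -(m-1), m \rrb)$, this forces $x_i \in \llb -(m-2), -1 \rrb$; in particular, writing $X = \llb -(m-1), m \rrb$, the value $x_i$ is neither $\min X = -(m-1)$ nor $\max X = m$. The goal is to show that the presence of such an exceptional negative term would make $\ss$ too short, namely of length at most $2m-3$, contradicting $n = 2m-2$.

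I would then set up a nyctalopic permutation $\sigma$ that begins at this exceptional term and places the value $m = x_1$ in second position. Concretely, put $\sigma(1) = i$ and $\sigma(2) = 1$; the triple $(\ss, 2, \sigma)$ is nyctalopic because $x_{\sigma(2)} = m > 0$ while the first partial sum $x_{\sigma(1)} = x_i < 0$, so their product is negative. Lemma \ref{prolong} extends this to a full nyctalopic permutation of $\llb 1, n \rrb$. Because $x_{\sigma(1)} = x_i$ differs from both $\min X$ and $\max X$, Lemma \ref{prop-nyctalopic} confines every partial sum $\sum_{l=1}^j x_{\sigma(l)}$ to the interval $X' = \llb -(m-2), m-1 \rrb$, which has exactly $2m-2$ elements.

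It then remains to verify the extra hypotheses of Lemma \ref{lem:pigeon} \ref{pig2} relative to the set $X'$. The second partial sum equals $x_i + m \in \llb 2, m-1 \rrb$, which is positive, so nyctalopia compels $x_{\sigma(3)} < 0$; hence $x_{\sigma(2)} = m$ and $x_{\sigma(3)}$ have opposite signs and are in particular distinct, while $x_{\sigma(2)} + x_{\sigma(3)} = m + x_{\sigma(3)} \in \llb 1, m-1 \rrb \subseteq X'$. With all hypotheses of Lemma \ref{lem:pigeon} \ref{pig2} satisfied, I obtain $n \le |X'| - 1 = 2m - 3$, contradicting $n = 2m-2$. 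This contradiction shows that every negative term of index at least $3$ must equal $-(m-1)$.

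The only delicate point, and the step I would treat most carefully, is the orchestration of $\sigma$: one must start at the exceptional negative term so that Lemma \ref{prop-nyctalopic} shrinks the admissible interval to size exactly $2m-2$, and one must force the largest value $m$ into second position so that the third entry is \emph{compelled} to be negative. These two choices are precisely what guarantee that the pair $(x_{\sigma(2)}, x_{\sigma(3)})$ meets the hypothesis $x_{\sigma(2)} + x_{\sigma(3)} \in X'$, upgrading the plain bound $n \le |X'|$ to the strict bound $n \le |X'| - 1$ that drives the contradiction.
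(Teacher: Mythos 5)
Your overall strategy is the paper's: exhibit a nyctalopic permutation anchored at the exceptional negative term, trap the partial sums in an interval of size $2m-2$, and then use the sharpened count of Lemma \ref{lem:pigeon} \ref{pig2} to force $n \le 2m-3$. The gap is in the last step. The statement of Lemma \ref{lem:pigeon} \ref{pig2} contains a misprint: as its proof (via Lemma \ref{refine}) shows, the extra value that the $n$ partial sums avoid is $x_{\sigma(1)}+x_{\sigma(3)}$, not $x_{\sigma(2)}+x_{\sigma(3)}$, so the hypothesis that actually has to be verified is $x_{\sigma(1)}+x_{\sigma(3)} \in X$. (Taken literally, part \ref{pig2} is false: for the minimal zero-sum sequence $\ss = 5\cdot 2\cdot 3\cdot(-10)$ with $\sigma$ the identity, the partial sums are $5,7,10,0$, one has $x_{2}\neq x_{3}$ and $x_{2}+x_{3}=5\in X=\{0,5,7,10\}$, yet $n=|X|$.) You verify $x_{\sigma(2)}+x_{\sigma(3)}=m+x_{\sigma(3)}\in\llb 1,m-1\rrb$, but with your ordering $\sigma(1)=i$, $\sigma(2)=1$ the value that must lie in the interval is $x_{\sigma(1)}+x_{\sigma(3)}=x_i+x_{\sigma(3)}$, a sum of two negative terms which can be as small as $-(m-2)-(m-1)=-(2m-3)$ and therefore need not belong to $X'=\llb -(m-2),m-1\rrb$; so the upgrade from $n\le 2m-2$ to $n\le 2m-3$ is not justified as written.

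The paper sidesteps this by ordering the other way round: $\sigma(1)=1$, $\sigma(2)=2$, $\sigma(3)=i$, so that $x_{\sigma(1)}+x_{\sigma(3)}=m+x_i\in\llb 1,m-1\rrb$ is visibly in range; the price is that Lemma \ref{prop-nyctalopic} then only confines the partial sums to $\llb -(m-2),m\rrb$ (size $2m-1$), and an extra ad hoc argument is needed to show they in fact avoid $-(m-2)$ before part \ref{pig2} can bite. Your construction could be salvaged, but not by quoting \ref{pig2}: you would need to state and prove a companion of Lemma \ref{refine} forbidding the value $x_{\sigma(2)}+x_{\sigma(3)}$, which holds when $x_{\sigma(1)}\neq x_{\sigma(3)}$ and $x_{\sigma(1)}\neq x_{\sigma(2)}+x_{\sigma(3)}$ --- conditions you could arrange by taking $\sigma(3)=2$. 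As it stands, the decisive inequality rests on a hypothesis you have not checked and which your choice of $\sigma$ does not guarantee.
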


\begin{proof}
Suppose the lemma is false and let us consider an index $i \geq 3$ such that $-(m-1) < x_i \leq -1$.
We consider $\sigma$ the function defined on  $\llb 1, 3 \rrb$ by
$$
\sigma (1)=1, \sigma (2)=2, \sigma (3)=i.
$$
The triple $(\ss, 3, \sigma)$ is nyctalopic. We apply Lemma \ref{prolong} to $(\ss, 3, \sigma)$
to extend $\sigma$ into a nyctalopic permutation of  $\llb 1, n \rrb$. We then apply
Lemma \ref{prop-nyctalopic}. We infer that all the partial sums
$\sum_{l=1}^j x_{\sigma(l)}$ ($1 \leq j \leq n$) belong to $\llb -(m-2), m \rrb$.

But in fact even the following more precise statement is true, namely
\begin{equation}
\label{m-3}
\sum_{l=1}^j x_{\sigma(l)} \in  \llb -(m-3), m \rrb .
\end{equation}
This is the case when $j=1$ or $2$ and, indeed, if for some $j \geq 3$, one has $\sum_{l=1}^j x_{\sigma(l)}=-(m-2)$,
then by definition of nyctalopia the sum $\sum_{l=1}^{j-1} x_{\sigma(l)}$ is either $< -(m-2)$ or positive.
Since the first possibility is not possible (all the partial sums are at least $-(m-2)$) then
$\sum_{l=1}^{j-1} x_{\sigma(l)} \geq 1$. It follows that
$$
x_{\sigma(j)} = \sum_{l=1}^{j} x_{\sigma(l)}  -  \sum_{l=1}^{j-1} x_{\sigma(l)} \leq -(m-2) -1 = -(m-1).
$$
The only possibility is that $x_{\sigma(j)} =-(m-1)$ and
$$
\sum_{l=1}^{j-1} x_{\sigma(l)} =1 = x_{\sigma (1)}+x_{\sigma (2)}.
$$
By Lemma \ref{lem:all_distinct}, this implies that we must have $j-1=2$ and thus
$x_{\sigma(j)}=x_{\sigma(3)}=x_i$, a contradiction since by assumption $x_i\neq -(m-1)$.
Assertion \eqref{m-3} is proved.

Since all partial sums in \eqref{m-3} are distinct, included in $\llb -(m-3), m \rrb$ and distinct from
$x_{\sigma (1)}+x_{\sigma (3)}= m+x_i \in \llb 1, m-1 \rrb$, by Lemma \ref{lem:pigeon} \ref{pig2},
we obtain $n \leq 2m-3$, a contradiction.
\end{proof}

Now that we know how negative elements look like, we study the positive ones.

We notice that there must exist in $\ss$ a positive element different from $m$,
otherwise $\ss$ would be of the form $m^u \cdot (-(m-1))^v$ for some
positive integers $u$ and $v$ and, by Lemma \ref{seq} \ref{seq1},
we would get $u=m-1$ and $v=m$ and finally
$$
2m-2 = || \ss || = u+v = (m-1) + m =2m-1,
$$
a contradiction.

Up to a reordering of the elements in the sequence, we may consequently assume that
$$
x_3 \in \llb 1,m-2 \rrb.
$$

\begin{lemma}
\label{pos}
The following holds :
\begin{enumerate}[label={\rm (\roman{*})}]
\item $x_3=1$,
\item if for some $i \in  \llb 1, n \rrb \setminus \{ 3 \}$, $x_i$ is positive, then it is equal to $m$.
\end{enumerate}
\end{lemma}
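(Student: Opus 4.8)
The plan is to prove (i) and (ii) simultaneously by forcing an extremal instance of the pigeonhole Lemma~\ref{lem:pigeon}~\ref{pig1} and reading off exact multiplicities, after which a single explicit subsequence rules out a second small positive. By Lemma~\ref{neg} every negative entry of $\ss$ equals $-(m-1)$, so each entry lies in $\{-(m-1)\}\cup\llb 1,m-2\rrb\cup\{m\}$. I would write $v$ for the number of entries equal to $-(m-1)$ and $p$ for the number equal to $m$, and let $a_1\le\cdots\le a_S$ be the entries in $\llb 1,m-2\rrb$ (the ``small positives''); by the remark preceding the lemma there is at least one, so $S\ge 1$. I would then start a nyctalopic permutation at a smallest small positive, putting $\sigma(1)$ equal to the index of $a_1$.

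First I would pin down the confinement. Since $a_1\in\llb 1,m-2\rrb$ is neither $\max X=m$ nor $\min X=-(m-1)$, extending $\sigma$ by Lemma~\ref{prolong} and applying Lemma~\ref{prop-nyctalopic} with \emph{both} inequalities strict shows that every partial sum lies in $C:=\llb -(m-2),m-1\rrb$, a set of cardinality exactly $2m-2=\|\ss\|$. The partial sums are pairwise distinct by Lemma~\ref{lem:all_distinct}, so equality must hold in Lemma~\ref{lem:pigeon}~\ref{pig1}: the partial sums realise \emph{every} element of $C$ exactly once. This saturation drives everything.

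Next I would extract the counts. By nyctalopia the step $x_{\sigma(j)}$ (for $j\ge 2$) is negative exactly when the preceding partial sum is positive. Since the partial sums exhaust $C$ and the last one is $0$, the positive partial sums are precisely those taking the values $1,\dots,m-1$, none of which is the final partial sum; hence there are exactly $m-1$ negative steps. As $x_{\sigma(1)}=a_1>0$, these account for all copies of $-(m-1)$, so $v=m-1$. The length identity then gives $p+S=m-1$, and the zero-sum identity $mp+(a_1+\cdots+a_S)=(m-1)v$ yields $a_1+\cdots+a_S=m(S-1)+1$; together with $a_k\le m-2$ this forces
\[
a_1=\big(m(S-1)+1\big)-\sum_{k\ge 2}a_k\ \ge\ \big(m(S-1)+1\big)-(S-1)(m-2)=2S-1 .
\]

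Finally I would conclude. Suppose $S\ge 2$. From $a_1\ge 2S-1\ge S$ one checks $0\le m-1-a_1\le p$ and $0< m-a_1\le v$, so the subsequence $m^{\,m-1-a_1}\cdot(-(m-1))^{\,m-a_1}\cdot a_1$ really sits inside $\ss$; its sum is $m(m-1-a_1)-(m-1)(m-a_1)+a_1=0$, and it is non-empty and proper because it omits $a_2$. This contradicts the minimality of $\ss$, so $S=1$, which is exactly assertion (ii). With $S=1$ the identity $a_1+\cdots+a_S=m(S-1)+1$ collapses to $x_3=a_1=1$, which is assertion (i). The step I expect to be the real obstacle is the bookkeeping that yields $v=m-1$: the tempting route through the refined count Lemma~\ref{lem:pigeon}~\ref{pig2} stalls, since the natural ``extra value'' $x_{\sigma(1)}+x_{\sigma(3)}$ escapes $C$ as soon as two small positives sum to at least $m$. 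Recognising that the confinement $C$ already has size exactly $\|\ss\|$, so that the pigeonhole is saturated, is precisely what converts the problem into the clean counting argument above.
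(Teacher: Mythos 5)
Your proof is correct, and while its first half coincides with the paper's, the second half takes a genuinely different route. The opening move is the same: start a nyctalopic permutation at a ``small'' positive entry (the paper fixes $\sigma(1)=3$, $\sigma(2)=2$, $\sigma(3)=1$; you start at the smallest element of $\llb 1,m-2\rrb$, which changes nothing), confine all partial sums to $\llb -(m-2),m-1\rrb$ via Lemmas \ref{prolong} and \ref{prop-nyctalopic} with both inequalities strict, and observe that this set has cardinality exactly $\|\ss\|=2m-2$, so the partial sums exhaust it. From there the paper proves that the signs of the partial sums alternate and runs a downward induction along the permutation, showing $x_{\sigma(2m-2j-1)}=m$ for $j\in\llb 1,m-2\rrb$; this pins down the entire rearranged sequence as $x_3\cdot m^{m-2}\cdot(-(m-1))^{m-1}$, and $x_3=1$ falls out of the zero-sum identity. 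You instead extract a single consequence of the saturation --- exactly $m-1$ of the partial sums are positive and none of them is the last one, so by nyctalopia exactly $m-1$ steps are negative, whence $v=m-1$ --- and then finish by bookkeeping: $p+S=m-1$, $\sum_k a_k=m(S-1)+1$, $a_1\ge 2S-1$, and the explicit proper zero-sum subsequence $m^{m-1-a_1}\cdot(-(m-1))^{m-a_1}\cdot a_1$ that rules out $S\ge 2$. I checked the delicate points: the multiplicity constraints $m-1-a_1\le p$ and $0<m-a_1\le v$ do follow from $a_1\ge 2S-1\ge S$ and $a_1\le m-2$, the displayed sum is indeed $0$, and properness holds since the subsequence misses (a copy of) $a_2$. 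Your version avoids both the sign-alternation argument and the structural induction, at the price of having to conjure the killing subsequence; the paper's version is longer but yields as a by-product the full ordered structure of the nyctalopic rearrangement, which is what its induction is really after.
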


\begin{proof}
We consider $\sigma$ such that
$$
\sigma (1)=3, \sigma (2)=2, \sigma (3)=1.
$$
The triple $(\ss, 3, \sigma)$ is easily seen to be nyctalopic. We apply Lemma \ref{prolong} to $(\ss, 3, \sigma)$
to extend $\sigma$ in a nyctalopic permutation of  $\llb 1, n \rrb$. We then apply Lemma \ref{prop-nyctalopic}.
We infer that all the partial sums
$\sum_{l=1}^j x_{\sigma(l)}$ belong to $\llb -(m-2), m-1 \rrb$.
Since this set has cardinality $2m-2$, one must have precisely
\begin{equation}
\label{sommepartielle}
\left\{ \sum_{l=1}^j x_{\sigma(l)} \text{ for } j=1,\dots, 2m-3 \right\}  =\llb -(m-2), m-1 \rrb \setminus \{0 \}.
\end{equation}

We consider the function $f$ defined on $\llb 1, n \rrb$ by $f(j)=\sum_{l=1}^j x_{\sigma(l)}$.
One has $f(1)=x_i>0$, $f(2)=x_i+1-m<0$, $f(3)=x_i+1>0$. More generally, if $f(k) >0$, by nyctalopia, one must have
$x_{\sigma(k+1)}<0$ and thus, by Lemma \ref{neg}, $x_{\sigma(k+1)}= -(m-1)$ which implies $f(k+1)=f(k)-(m-1)\leq 0$, where
equality can only happen for $k+1=n$.
Suppose now that the signs of the $f(k)$'s do not alternate when $k \in \llb 1, n-1 \rrb$, then we must have
$$
| \{ 1 \leq k \leq 2m-3 : f(k) <0 \} | > | \{ 1 \leq k \leq 2m-3 : f(k) >0 \} |
$$
which is impossible in view of \eqref{sommepartielle}. Thus the signs alternate and we have
$$
f(1), f(3),\dots, f(2m-3)>0
$$
and
$$
f(2), f(4),\dots, f(2m-4) <0.
$$

We now prove, by induction, that for any integer $j \in \llb 1, m-2 \rrb$, we have $x_{\sigma (2m -2j-1)}=m$.

Indeed, $f(2m-2)=0$ and $f(2m-3)>0$ thus $x_{\sigma (2m-2)}<0$ and thus, by Lemma \ref{neg}, $x_{\sigma (2m-2)}=-(m-1)$.
It follows $f(2m-3)=m-1$. But by the alternance of signs, $f(2m-4)<0$ which implies $x_{\sigma (2m-3)}=f(2m-3)-f(2m-4) \geq m$
and therefore $x_{\sigma (2m-3)}=m$. This proves the statement for $j=1$.

Assume now that for some integer $k \in \llb 1, m-1 \rrb$, the statement is proved for any $j \in \llb 1, k \rrb$.
It follows immediately that
$$
f(2m-3)=m-1, f(2m-5)=m-2,\dots, f(2m-2k-1)=m-k
$$
and
$$
f(2m-4)=-1, f(2m-6)=-2,\dots, f(2m-2k-2)=-k.
$$
Since $f(2m-2k-2)=-k <0$, we must have by the alternance of signs, $f(2m-2k-3)>0$ which implies first that
$x_{\sigma ( 2m-2k-2)}<0$ and thus $x_{\sigma ( 2m-2k-2)}=-(m-1)$. Finally we find that
$$
f(2m-2k-3)=f(2m-2k-2) - x_{\sigma ( 2m-2k-2)} = -k +(m-1)=m-(k+1).
$$
Since, again by the alternance of signs, $f(2m-2k-4)<0$, we have $ x_{\sigma ( 2m-2k-3)}>0$ and one must have
$$
f(2m-2k-4)=f(2m-2k-3) - x_{\sigma ( 2m-2k-3)} \geq (m-(k+1))-m = -(k+1).
$$
Since, by Lemma \ref{lem:all_distinct}, $f$ never takes twice the same value, $f(2m-2k-4) \neq
f(2m-4), f(2m-6),\dots, f(2m-2k-2)$ that is, $f(2m-2k-4) \neq -1,-2,\dots, -k$. This implies finally that
$f(2m-2k-4)= -(k+1)$ and thus that $x_{\sigma ( 2m-2k-3)}=m$, as required to conclude the induction.

Using the statement just proved and the explicit description of the first values of $\sigma$ we obtain the
conclusion of the statement (ii) of the lemma.

By summing all the elements in the zero-sum sequence $\ss$, we obtain thanks to the descriptive lemma
\ref{neg} and what we just proved
$$
0 = x_3 + (m-2)m+(m-1)(-m+1) =x_3-1
$$
thus $x_3=1$ and (i) is proved.
\end{proof}

We are now ready to conclude the proof of Theorem \ref{th:inverse_2m-2}.
One checks the minimality of the sequence $\ss = m^{m-2} \cdot (-(m-1))^{m-1} \cdot 1$, by noticing that,
would this be false, $\AAc ( \{-(m-1), m \} )$ would contain a subsequence of $\ss$, which cannot be
by Lemma \ref{seq} \ref{seq1}.
\end{proof}

\section{Proofs of Theorems \ref{barbie} and \ref{th:main_3}}
\label{avtder}

In this section, we present the proofs of Theorems \ref{barbie} and \ref{th:main_3}.
To ease the reading, these proofs are decomposed into elementary bricks.
Subsection \ref{aa11} contains the proof of all the upper bounds, in particular
the full proof of Theorem \ref{barbie}, its application to Theorem  \ref{th:main_3} (iii)
and the special improvement given in Theorem \ref{th:main_3} (ii).
Subsection \ref{aa22} contains the proof of Theorem \ref{th:main_3} (i).
Finally, Subsection \ref{subsect1} contains the proof of the general lower bounds of Theorem \ref{th:main_3} (ii) and (iii)
(case $m \geq 2$) and Subsection \ref{aa33}  contains the special case $m=1$ in (iii) of Theorem \ref{th:main_3}.

\subsection{Proof of Theorem \ref{barbie} and of the upper bounds in Theorem \ref{th:main_3}}
\label{aa11}

We start from an old question of Riemann and L\' evy.
This was investigated by L\' evy \cite{Levy} himself more than a century ago
but it was Steinitz \cite{Stein13} who gave the first complete proof of the following result.

\begin{theorem}
Let $d$ be a positive integer and $U \subseteq \RR^d$ such that $0 \in U$.
There exists a constant $c$ such that whenever $u_1, \ldots, u_n \in U$ and $u_1 + \cdots + u_n = 0$,
there is a permutation $\pi$ of $\llb 1, n \rrb$ such that $u_{\pi(1)} + \cdots + u_{\pi(i)} \in c \cdot U$
for each $i \in \llb 1,n \rrb$.
\end{theorem}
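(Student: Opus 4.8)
The plan is to establish the existence of such a constant $c$ (depending only on the dimension $d$, and in fact one may take $c = d$) by an explicit greedy rearrangement argument. The situation is translation-flavoured: we are given vectors $u_1, \ldots, u_n \in U$ summing to $0$, and we want to reorder them so that every partial sum stays inside a fixed dilate $c \cdot U$ of the set $U$. Since $0 \in U$, the origin is a legitimate ``base point'', and this is what lets the partial sums (which run from $0$ up to $0$) be controlled by a dilate of $U$ itself rather than by some unrelated convex body.

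The core of the argument is purely geometric and relies on passing to the convex hull. First I would reduce to the case where $U$ is replaced by its symmetric convex hull, or more precisely work with the norm whose unit ball is the symmetrised convex hull of $U \cup (-U)$; the point is that $U \subseteq c \cdot U$ will follow once the partial sums are shown to lie in a controlled multiple of this convex body. The heart of the proof is the following well-known linear-algebra fact, which I would isolate as the main lemma: given vectors $v_1, \ldots, v_n$ in $\RR^d$ summing to $0$, one can choose the ordering greedily from the \emph{end}. Concretely, one builds the permutation $\pi$ in reverse. Suppose $\pi(k+1), \ldots, \pi(n)$ have already been selected; among the remaining indices one uses a convexity/compactness argument (an application of a theorem of the Carathéodory type, or equivalently a vertex-of-a-polytope selection) to pick the next vector $u_{\pi(k)}$ so that the running partial sum $s_k = u_{\pi(1)} + \cdots + u_{\pi(k)}$ can be written as a combination of at most $d$ of the original vectors with coefficients in $[0,1]$. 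This is the step that produces the factor $d$: a partial sum expressed as a bounded nonnegative combination of at most $d$ vectors, each of norm at most $1$ in the convex-hull norm, automatically lies in $d \cdot U$.

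More carefully, the selection step goes as follows. At stage $k$ one considers the set of indices not yet used, together with the constraint that the \emph{total} of all used-plus-unused equals the target. One sets up a linear feasibility system: the partial sum $s_k$ is a point, and one seeks to express it via the remaining vectors subject to the partial-sum containment. The key observation is that the feasible region is a (nonempty) polytope whose vertices have at most $d$ nonzero ``fractional'' coordinates, by the standard basic-feasible-solution count for a system with $d$ equality constraints. Choosing a vertex, at most $d$ of the coefficients are strictly between $0$ and $1$, and the rest are integral ($0$ or $1$); the fractional part then witnesses $s_k \in d \cdot U$ after the appropriate translation. Iterating this selection downward from $k = n$ to $k = 1$ yields the full permutation with the required bound, and the constant $c = d$ emerges.

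The main obstacle, and the step requiring the most care, is precisely this vertex-selection argument: one must set up the auxiliary linear system so that (a) it remains feasible at every stage — which uses that the vectors sum to $0$ so that the ``budget'' is always consistent — and (b) the polytope of solutions genuinely has the claimed sparsity of its vertices, so that the containment in $d \cdot U$ is not lost. I would expect the feasibility bookkeeping (ensuring the greedy choice never gets stuck) to be the delicate part, whereas the extraction of the dimension-bounded constant from a basic feasible solution is routine once the system is correctly posed. The translation-invariance afforded by $0 \in U$ should make the final containment $s_k \in c \cdot U$ clean, since no additive shift by a point outside $U$ is ever needed.
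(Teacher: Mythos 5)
You should first be aware that the paper does not prove this theorem at all: it is quoted as a classical result of Steinitz \cite{Stein13}, and what the paper actually uses downstream is the quantitative refinement $C_d \le d + \frac1d - 1$ for the $\|\cdot\|_\infty$-ball, imported from Banaszczyk \cite{Banasz87}. So there is no internal proof to compare yours against; what you have written is an attempt at the external result. The route you sketch --- a backward greedy construction of nested index sets $A_n \supset A_{n-1} \supset \cdots \supset A_d$, with a basic-feasible-solution count supplying the constant --- is indeed the standard modern proof of the Steinitz lemma with constant $d$, so the architecture is the right one.

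However, the execution is wrong or missing precisely at the step you yourself flag as delicate. (a) The sparsity count is off by one: the auxiliary polytope $\bigl\{\alpha \in [0,1]^{A_k} : \sum_{v\in A_k}\alpha(v)\,v = 0,\ \sum_{v\in A_k}\alpha(v) = k-d\bigr\}$ is cut out by $d+1$ equalities, so a vertex has at most $d+1$ fractional coordinates, not $d$. (b) The mechanism producing the bound is not that ``$s_k$ is a combination of at most $d$ of the original vectors'': the correct identity is $\sum_{v\in A_k} v = \sum_{v\in A_k}\bigl(1-\alpha(v)\bigr)v$ with $\sum_{v\in A_k}\bigl(1-\alpha(v)\bigr) = d$, a sub-convex combination of \emph{all} $k$ vectors of $A_k$ with total weight $d$; this is what gives $\|s_k\|\le d$. (c) The feasibility bookkeeping is asserted but not done: one must show that after rescaling the weights to total $k-1-d$ and passing to a vertex, some coordinate equals $0$, so that the corresponding index can be dropped to form $A_{k-1}$. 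This is a short count (at most $d+1$ fractional coordinates, at most $k-1-d$ coordinates equal to $1$, hence at least one equal to $0$), but it is the entire content of the lemma and is absent from your write-up. Finally, your reduction to the norm whose unit ball is the symmetrised convex hull of $U$ proves that the partial sums lie in a dilate of that convex body, not of $U$ itself; for an arbitrary set $U\ni 0$ the literal statement can in fact fail (try $U=\{0,1,-2\}\subseteq\RR$ with the sequence $1,1,-2$), so the theorem must be read, as the paper implicitly does, for $U$ a symmetric convex body such as $[-1,1]^d$ --- in which case your reduction is vacuous and your argument, once repaired as above, does give the result with $c=d$.
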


In this statement, we used the notation $\alpha \cdot U$  for the $\alpha$-dilate of $U$, namely
$$
\alpha \cdot U = \{\alpha u: u \in U\}.
$$

We shall call {\em the Steinitz constant of $U$} the infimum of all constants
$c \in \mathbb \RR^+$ that can be taken in the Theorem.
Steinitz' original results on this constant were later improved by various authors, especially
in the case when $U$ is the closed unit ball relative to a norm $\|\cdot\|$ on $\RR^d$.
In particular, if we consider the superior norm $\|\cdot\|_{\infty}$,
$$
\|(x_1, \ldots, x_d)\|_{\infty} = \max_{1 \le i \le d} |x_i|,
$$
then we denote the corresponding constant by $C_d$: it corresponds to the Steinitz constant of the hypercube.
It is known \cite{Banasz87} (see Remark 3 there) that one has
\begin{equation}
\label{stein}
C_d \leq d+ \frac1d - 1.
\end{equation}

Upper estimates of $C_d$ are immediately made into upper bounds on the Davenport constant.
This is the content of Theorem \ref{barbie}, that we prove now.

\begin{proof}[Proof of Theorem \ref{barbie}]
Consider a sequence $\ss \in \BBc (X)$ and write $\ss = u_1 \cdots u_n$, let $u_i = (u_{i,1}, \ldots, u_{i,d})$ and put
$$
v_i = \left( \frac{u_{i,1}}{m_1}, \ldots, \frac{u_{i,d}}{m_d} \right)
$$
for each $1 \leq i \leq n$, so that  $\| v_i\|_{\infty} \leq 1$ and $v_1 + \cdots + v_n = 0$.
It follows that there exists a permutation $\pi$ of $\llb 1, n \rrb$ such that $v_{\pi(1)} + \cdots + v_{\pi(i)}$
belongs to the box $C_d \cdot B$  where $B$ is the unit ball for $\|\cdot\|_{\infty}$, that is, the hypercube $[-1,1]^d$.
This implies that all the sums $u_{\pi(1)} + \cdots + u_{\pi(i)}$ are lattice points of $C_d \cdot X$.
But the total number of lattice points in $C_d \cdot X = [-C_d m_1, C_d m_1] \times \cdots \times [-C_d m_d, C_d m_d]$
is equal to
$$
\prod_{i=1}^d (2 [C_d m_i ]+1) \leq \prod_{i=1}^d ( 2 C_d m_i +1)
$$
which finally yields, together with Lemma \ref{lem:pigeon} (i), that
$$
\DD(X) \le  \prod_{i=1}^d ( 2C_d m_i +1) .
$$
\end{proof}

The general upper bound of Theorem \ref{th:main_3}
(the one valid for any integral $d \geq 3$) follows immediately from this lemma
applied to $m_1=\cdots=m_d$ and \eqref{stein}.

To prove the particular case $d=2$ (the upper bound in Theorem \ref{th:main_3} (ii)),
we slightly refine this reasoning using a result from \cite{Banasz91}
valid in 2-dimensional spaces, which is a variation on Steinitz' theme.
The main theorem of Banaszczyk's paper \cite{Banasz91} asserts that if $a$ and $b$ are two real numbers
satisfying $a, b \geq 1$ and $a+b \geq 3$, then the following holds:
if $u_1, \ldots, u_n \in B$ ($B$ is again the unit ball relative to the superior norm) and $u_1 + \cdots + u_n = 0$,
there is a permutation $\pi$ of $\llb 1, n \rrb$ such that $u_{\pi(1)} + \cdots + u_{\pi(i)} \in [-a,a]\times[-b,b]$.
Following the same lines as in the preceding proof, this implies, choosing $a=1, b=2$ in this result
that starting from a sequence in $\llb -m_1, m_1 \rrb \times \llb -m_2, m_2 \rrb$,
we may reorder the elements so that the partial sums stay in the rectangle
$\llb -m_1, m_1 \rrb \times  \llb -2m_2, 2m_2 \rrb$. As above, it follows
$$
\DD( \llb -m_1, m_1 \rrb \times   \llb -m_2, m_2 \rrb ) \le  ( 2m_1 +1)(4m_2+1),
$$
which concludes the proof of the result.

\subsection{Proof of Theorem \ref{th:main_3} (i): the case \texorpdfstring{$d=2$}{d=2}, \texorpdfstring{$m=1$}{m=1}}
\label{aa22}

This subsection is devoted to the proof that $ \DD(\llb -1,1\rrb^2)=4$.

We look at the sequence $\tt = (1,-1)\cdot (1, 1) \cdot (-1, 0)^2$. It is easily seen
that $\tt$ is a minimal zero-sum sequence.

Suppose we want to construct a minimal zero-sum sequence of size $n >2$ as long
as possible, then such a sequence $\ss$ can contain at most four distinct elements
(by Lemma \ref{lem:trivial} \ref{lem:trivial:item_ii} and \ref{lem:trivial:item_iii}, $(0,0)$
is not in the sequence and there is at most one point on each line containing $(0,0)$),
and in particular two among $(1,0), (0,1), (-1,0), (0,-1)$, without loss of generality, $(1,0)$ and $(0,1)$.
The point $(-1, -1)$ must be in $\ss$, otherwise the two other points are $(1,1)$ and
up to a symmetry $(1,-1)$, say, but then all the four points have a non negative first coordinate,
leading to a contradiction. Thus  $(-1, -1)$ is in $\ss$. We finally choose as
the fourth point, again without loss of generality by symmetry, $(1,-1)$.
Write $\ss = (1,0)^a \cdot (0,1)^b \cdot (-1,-1)^c \cdot (1,-1)^d$
where $a,b,c$ and $d$ are non-negative integers.
This sequence has sum zero if and only if $a-c+d=0$ and $b-c-d=0$, thus $\ss$ is of the form
$(1,0)^{c-d} \cdot (0,1)^{c+d} \cdot (-1,-1)^{c} \cdot (1,-1)^{d}$ and $c \geq d$.

If $c>d$ then, in particular, $c>0$, which implies that $(1,0)\cdot (0,1) \cdot (-1,-1)$
is a zero-sum subsequence of $\ss$, which implies, by minimality of $\ss$, that
$\ss=(1,0)\cdot (0,1) \cdot (-1,-1)$ and $n=3$. If $c=d$, then $\ss = (0,1)^{2c} \cdot (-1,-1)^{c} \cdot (1,-1)^{c}
=((0,1)^2 \cdot (-1,-1) \cdot (1,-1))^{c}$ and the minimality of $\ss$ implies $c=1$ and $n =4$.
The result is proved.

\subsection{Proof of Theorem \ref{th:main_3} (ii) and (iii): the lower bound in the case \texorpdfstring{$m \geq 2$}{m > 1}}
\label{subsect1}

In all this subsection $m$ is a fixed integer satisfying $m \geq 2$.

We consider the following sequence of zero-sum sequences defined inductively. We let
$$
\ss_1 = m^{m-1} \cdot (-(m-1))^m.
$$
By Corollary \ref{cor:3}, $\ss_1$ belongs to $\AAc ( \llb -m,m \rrb )$ and it has length $|| \ss_1 ||= 2m-1$.
Suppose we already defined a minimal zero-sum sequence $\ss_d$ of $\BBc ( \llb -m,m \rrb ^d)$
of size $|| \ss_d ||= (2m-1)^d$.
Write $\ss_d = x_1 \cdot x_2 \cdots x_n$ where $n= (2m-1)^d$. We define the sequence $\ss_{d+1}$ as follows
\begin{equation}
\label{sd}
\ss_{d+1} = (x_1, m)^{m-1} \cdot (x_2,m)^{m-1} \cdots (x_n,m)^{m-1} \cdot (0, -(m-1))^{mn}.
\end{equation}
It is immediate that $\ss_{d+1} \in \BBc (  \llb -m,m \rrb^{d+1})$ and
$$
|| \ss_{d+1} || = n(m-1) + mn = (2m-1) || \ss_d || = (2m-1)^{d+1}.
$$
This inductive argument implies that, for any positive integer $d$, one has
\begin{equation}
\label{taille}
|| \ss_d ||= (2m-1)^d.
\end{equation}

We start with a basic property of this sequence which will be used in Section \ref{der}.

\begin{lemma}
\label{multiplicity}
For any positive integer $d$, the sequence $\ss_d$ can be written
$$
\ss_d = u_1^{\alpha_1} \cdot  u_2^{\alpha_2} \cdots  u_{d+1}^{\alpha_{d+1}}
$$
where the $u_j$ ($1 \leq j \leq d+1$) are distinct elements of $\llb -m,m \rrb^d$,
the $\alpha_j$ ($1 \leq j \leq d+1$) are positive integers and
$$
\gcd ( \alpha_1, \alpha_2,\dots, \alpha_{d+1})=1.
$$
\end{lemma}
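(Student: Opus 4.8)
The plan is to argue by induction on $d$, keeping track of the \emph{reduced} factorisation of $\ss_d$ (the factorisation into pairwise distinct elements together with their exact multiplicities) as it is transformed by the recursion \eqref{sd}. Concretely, I would prove the stronger bookkeeping statement that $\ss_d$ has exactly $d+1$ distinct elements and that the gcd of their multiplicities equals $1$; the lemma is then immediate.

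For the base case $d=1$, the sequence $\ss_1 = m^{m-1}\cdot(-(m-1))^m$ has the two distinct terms $m$ and $-(m-1)$ (distinct since $-(m-1)<0<m$ for $m\ge 2$), with multiplicities $m-1$ and $m$, and $\gcd(m-1,m)=1$. This matches the claim. For the inductive step, suppose $\ss_d = u_1^{\alpha_1}\cdots u_{d+1}^{\alpha_{d+1}}$ is the reduced factorisation, with $\gcd(\alpha_1,\dots,\alpha_{d+1})=1$. Grouping the factors in \eqref{sd} according to the value of the $x_i$, the recursion rewrites as
$$
\ss_{d+1} = \prod_{j=1}^{d+1}(u_j,m)^{(m-1)\alpha_j}\cdot(0,-(m-1))^{m(2m-1)^d}.
$$
The points $(u_1,m),\dots,(u_{d+1},m)$ are pairwise distinct because the $u_j$ are, and each of them differs from $(0,-(m-1))$ already in its last coordinate (as $m\neq-(m-1)$). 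Hence $\ss_{d+1}$ has exactly $d+2$ distinct elements, with multiplicities $(m-1)\alpha_1,\dots,(m-1)\alpha_{d+1}$ and $m(2m-1)^d$, which is the count and the shape predicted for index $d+1$.

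It then remains to verify the coprimality condition, which is the only point calling for a short computation. Using $\gcd(cx_1,\dots,cx_k,y)=\gcd\bigl(c\gcd(x_1,\dots,x_k),y\bigr)$ together with the inductive hypothesis $\gcd(\alpha_1,\dots,\alpha_{d+1})=1$, the gcd of the new multiplicities equals $\gcd\bigl(m-1,\,m(2m-1)^d\bigr)$. Now $\gcd(m-1,m)=1$, while $2m-1\equiv1\pmod{m-1}$ yields $\gcd(m-1,2m-1)=1$ and hence $\gcd(m-1,(2m-1)^d)=1$; combining these gives $\gcd\bigl(m-1,\,m(2m-1)^d\bigr)=1$, closing the induction.

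The argument is essentially a routine induction, so I do not expect a serious obstacle. The one point I would be careful about—and which does the real work—is the distinctness bookkeeping: one must check that lifting by $(\,\cdot\,,m)$ keeps the old elements distinct and that the freshly introduced point $(0,-(m-1))$ is genuinely new, so that the number of distinct elements goes up by exactly one at each step and the claimed count $d+1$ is maintained. Everything else, including the gcd reduction, is then forced.
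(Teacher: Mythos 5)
Your proof is correct and follows essentially the same route as the paper's: induction on $d$, with the base case $\gcd(m-1,m)=1$, the observation that the lift $(u_j,m)$ preserves distinctness and that $(0,-(m-1))$ is new, and the gcd computation reducing to $\gcd\bigl(m-1,\,m(2m-1)^d\bigr)=1$. The only cosmetic difference is that you collapse the gcd in one step via $\gcd(c x_1,\dots,c x_k,y)=\gcd\bigl(c\gcd(x_1,\dots,x_k),y\bigr)$, whereas the paper first strips the factor $m-1$ and then invokes divisibility by $\gcd(\beta_1,\dots,\beta_{d+1})=1$; both are sound.
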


\begin{proof}
The proof is again by induction. For $d=1$, we have $\ss_1 = m^{m-1} \cdot (-(m-1))^m$
and we observe that $\ss_1$ contains two distinct elements repeated $\alpha_1 = m$
and $\alpha_2 = m-1$ times respectively. It is immediate that $\gcd ( m,m-1)=1$ and the
result is proved.

Suppose the result is proved for some integer $d \geq 1$ that is, that
$\ss_d= u_1^{\beta_1} \cdot  u_2^{\beta_2} \cdots  u_{d+1}^{\beta_{d+1}}$
for some distinct elements $u_j$ of $\llb -m,m \rrb^d$ and some positive
integers $\beta_j$ (for $1 \leq j \leq d+1$). A look at \eqref{sd},
taking into account \eqref{taille}, shows immediately that
$$
\ss_{d+1} = (u_1, m)^{(m-1)\beta_1} \cdot (u_2,m)^{(m-1)\beta_2} \cdots (u_{d+1},m)^{(m-1)\beta_{d+1}} \cdot (0, -(m-1))^{m(2m-1)^d}
$$
and we observe that, writing $(u_j,m) = v_j$ for $1 \leq j \leq d+1$ and $v_{d+2}=(0, -(m-1))$,
the $v_j$'s are distinct. Moreover, writing $\alpha_j = (m-1) \beta_j$ for $1 \leq j \leq d+1$ and
$\alpha_{d+2}=m(2m-1)^d$, one obtains
$$
\ss_{d+1} = v_1^{\alpha_1} \cdot  v_2^{\alpha_2} \cdots  v_{d+2}^{\alpha_{d+2}}.
$$
But,
\begin{eqnarray*}
\gcd ( \alpha_1, \alpha_2,\dots, \alpha_{d+2}) & = & \gcd ((m-1)\beta_1, (m-1)\beta_2,\dots, (m-1)\beta_{d+1}, m(2m-1)^d)\\
													& = & \gcd ( \beta_1, \beta_2,\dots, \beta_{d+1}, m(2m-1)^d)
\end{eqnarray*}
since $\gcd (m-1, m(2m-1)^d)=1$. But using the induction hypothesis, we have
$$
\gcd ( \beta_1, \beta_2,\dots, \beta_{d+1}, m(2m-1)^d)  \mid \gcd ( \beta_1, \beta_2,\dots, \beta_{d+1})=1
$$
and finally $\gcd ( \alpha_1, \alpha_2,\dots, \alpha_{d+2}) =1$. The result is proved.
\end{proof}

The following lemma is central for our purpose.

\begin{lemma}
\label{ssdj}
For any two integers $d, u \geq 1$, the non-empty zero-sum subsequences of $\ss_d^{u}$ are
exactly the sequences $\ss_d^j$ for $1 \leq j \leq u$.
\end{lemma}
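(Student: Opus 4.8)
The plan is to prove the two inclusions separately. One direction is immediate: for each $j$ with $1 \le j \le u$ the sequence $\ss_d^j$ is a non-empty zero-sum sequence (it is $j$ concatenated copies of the zero-sum sequence $\ss_d$), and since every distinct element of $\ss_d$ occurs in $\ss_d^j$ with multiplicity at most $u$ times its multiplicity in $\ss_d$, it is indeed a subsequence of $\ss_d^u$. So the real content is the converse: that every non-empty zero-sum subsequence of $\ss_d^u$ equals $\ss_d^j$ for some $1 \le j \le u$.

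To handle the converse I would invoke Lemma \ref{multiplicity} to write $\ss_d = u_1^{\alpha_1} \cdots u_{d+1}^{\alpha_{d+1}}$ with the $u_j$ distinct and $\gcd(\alpha_1, \ldots, \alpha_{d+1}) = 1$, so that $\ss_d^u = u_1^{u\alpha_1} \cdots u_{d+1}^{u\alpha_{d+1}}$ and every subsequence has the shape $\tt = u_1^{a_1} \cdots u_{d+1}^{a_{d+1}}$ with $0 \le a_j \le u\alpha_j$. Such a $\tt$ is a zero-sum sequence exactly when $(a_1, \ldots, a_{d+1})$ lies in the lattice $L = \{(c_1, \ldots, c_{d+1}) \in \ZZ^{d+1} : \sum_{j} c_j u_j = 0\}$ of integer relations among $u_1, \ldots, u_{d+1}$.

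The heart of the matter, and the step I expect to be the main obstacle, is to show that $u_1, \ldots, u_{d+1}$ span $\RR^d$; equivalently, that $L$ has rank $1$. I would prove the spanning statement by induction on $d$, following the inductive construction \eqref{sd}. The base case $d=1$ is clear since $u_1 = m \ne 0$. For the inductive step, recall from the proof of Lemma \ref{multiplicity} that the distinct elements of $\ss_{d+1}$ are $(u_j, m)$ for $1 \le j \le d+1$ together with $(0, -(m-1))$. Let $\pi \colon \RR^{d+1} \to \RR^d$ be the projection onto the first $d$ coordinates; by the induction hypothesis the images $\pi(u_j, m) = u_j$ span $\RR^d$, so $W := \mathrm{span}\{(u_j, m) : 1 \le j \le d+1\}$ has dimension at least $d$. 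If $\dim W = d+1$ we are done; otherwise $\dim W = d$ and $\pi|_W$ is onto, hence injective, so $W \cap \ker \pi = \{0\}$. Since $m \ge 2$, the vector $(0, -(m-1))$ lies in $\ker \pi \setminus \{0\}$ and therefore not in $W$, so adjoining it produces a space of dimension $d+1$. This proves the claim.

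Granting the claim, $L$ has rank $1$, so the rational relation space is one-dimensional and, containing $(\alpha_1, \ldots, \alpha_{d+1})$, is spanned by it. Thus for our zero-sum $\tt$ there is $\lambda = p/q \in \mathbb{Q}$ (in lowest terms, $q > 0$) with $a_j = \lambda \alpha_j$ for every $j$. Then $q a_j = p \alpha_j$ together with $\gcd(p,q) = 1$ forces $q \mid \alpha_j$ for all $j$, hence $q \mid \gcd(\alpha_1, \ldots, \alpha_{d+1}) = 1$, so $q = 1$ and $\lambda = p \in \ZZ$. Finally the constraints $0 \le a_j = p\alpha_j \le u\alpha_j$ with $\alpha_j > 0$ give $0 \le p \le u$, and non-emptiness of $\tt$ excludes $p = 0$; therefore $\tt = \ss_d^p$ with $1 \le p \le u$, which completes the proof.
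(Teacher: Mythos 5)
Your proof is correct, but it takes a genuinely different route from the paper's. The paper proves Lemma \ref{ssdj} by induction on $d$ for the full statement: given a zero-sum subsequence $\tt$ of $\ss_{d+1}^u$, it projects onto the first $d$ coordinates, applies the induction hypothesis to $\ss_d^{(m-1)u}$ to force each $(x_i,m)$ to occur in $\tt$ with a common multiplicity $k$, and then determines the multiplicity $l$ of $(0,-(m-1))$ from the arithmetic identity $k(2m-1)^d m = l(m-1)$ together with $\gcd(m-1, m(2m-1)^d)=1$; the base case $d=1$ rests on Lemma \ref{seq}. You instead linearize the problem: you show (by a separate, lighter induction following \eqref{sd}) that the $d+1$ distinct elements in the support of $\ss_d$ span $\RR^d$, so the lattice of integer relations among them has rank one, and then the coprimality of the multiplicities from Lemma \ref{multiplicity} forces every relation realizable inside $\ss_d^u$ to be an integer multiple of $(\alpha_1,\dots,\alpha_{d+1})$. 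Your argument isolates a reusable general principle (any zero-sum sequence supported on $d+1$ distinct vectors spanning $\RR^d$ with coprime multiplicities has only the obvious zero-sum subsequences in all of its powers) and puts Lemma \ref{multiplicity} to work twice, whereas the paper proves that lemma only for later use in Theorem \ref{th5} and keeps the proof of Lemma \ref{ssdj} entirely combinatorial and self-contained. The trade-off is that your spanning claim is an extra structural fact about the construction that the paper never needs to verify; both proofs are of comparable length and both are sound.
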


\begin{proof}
Again, this result is proved by induction. For $d=1$, we consider
$$
\ss_1^u = m^{(m-1)u} \cdot (-(m-1))^{mu}    \in \BBc ( \{ -(m-1), m \} ).
$$
Thus any subsequence $\tt$ of $\ss_1$ must belong to $\BBc ( \{ -(m-1), m \} )$ and,
in view of Lemma \ref{seq} \ref{seq2}, has to be of the form $\tt = \ss_1^j$ for some non-negative integer $j$.

Assume the result is true for some integer $d \geq 1$ and let $\tt$ be a zero-sum subsequence of
$$
\ss_{d+1}^u=(x_1, m)^{(m-1)u} \cdot (x_2,m)^{(m-1)u} \cdots (x_n,m)^{(m-1)u} \cdot (0, -(m-1))^{mnu}
$$
if we denote $\ss_d = x_1 \cdot x_2 \cdots x_n$.
By considering the sequence obtained from $\tt$ by projection on the first $d$ coordinates, which is
nothing but the sequence $\ss_d^{(m-1) u}$ (up to the zeroes obtained from the projection of the elements
$(0, -(m-1))^{mnu}$), and applying the induction hypothesis, we get that
$\tt$ must contain each element $(x_i, m)$ the same number of times, say $j$. It follows that $\tt$
is of the form
$$
\tt =(x_1, m)^{k} \cdot (x_2,m)^{k} \cdots (x_n,m)^{k} \cdot (0, -(m-1))^{l}
$$
for some positive integers $k$ and $l$.
Summing on the last coordinate yields $knm=l(m-1)$. But, by \eqref{taille}, $n= || \ss_k ||=(2m-1)^d$, which gives
$$
k(2m-1)^d m=l (m-1)
$$
from which it follows that $m-1$ divides $k$ in view of $\gcd (m-1,m) = \gcd (m-1, 2m-1)=1$. It follows
$$
k = j (m-1)\quad \text{ and }\quad l=j (2m-1)^d m=jnm
$$
for some integer $j \geq 1$. In other words, $\tt = \ss_{d+1}^j$, which was to be proved to complete the induction step.

The lemma is proved.
\end{proof}

Applying the preceding lemma in the special case $u=1$, we obtain the following result.

\begin{corollary}
For any integer $d \geq 1$, the sequence $\ss_d$ is a minimal zero-sum sequence of $\llb -m,m \rrb^d$.
\end{corollary}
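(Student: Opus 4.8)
The plan is to deduce the statement directly from Lemma \ref{ssdj}, specialised to the case $u = 1$. Recall that a non-empty zero-sum sequence is, by definition, minimal precisely when it possesses no proper non-empty zero-sum subsequence. Since the inductive construction \eqref{sd} was arranged so that $\ss_d \in \BBc(\llb -m, m \rrb^d)$ for every $d \ge 1$ (this is recorded immediately after \eqref{sd}, together with the length formula \eqref{taille}), the sequence $\ss_d$ is already known to be a zero-sum sequence over $\llb -m, m \rrb^d$. Thus the only point that remains to be checked is the absence of proper non-empty zero-sum subsequences.

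First I would take $u = 1$ in Lemma \ref{ssdj}. The lemma then asserts that the non-empty zero-sum subsequences of $\ss_d^{1} = \ss_d$ are exactly the sequences $\ss_d^{j}$ with $1 \le j \le 1$; that is, $\ss_d$ itself and nothing else. In particular, no proper subsequence of $\ss_d$ is a non-empty zero-sum sequence, and therefore $\ss_d$ is minimal, which is the desired conclusion.

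I do not expect any genuine obstacle here: all the real content has already been absorbed into the inductive proof of Lemma \ref{ssdj}, and the corollary is merely the $u = 1$ instance read off directly. Should one prefer to avoid invoking the lemma in full strength, the same conclusion could be reached by the identical induction on $d$ restricted to the case $u = 1$, using Lemma \ref{seq} \ref{seq2} in the base case $d = 1$ and projecting onto the first $d$ coordinates in the inductive step; but since Lemma \ref{ssdj} is precisely this argument carried out for all $u$ simultaneously, quoting it is the most economical route.
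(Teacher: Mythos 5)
Your proposal is correct and matches the paper exactly: the corollary is obtained there precisely by applying Lemma \ref{ssdj} with $u=1$, so that $\ss_d$ is the only non-empty zero-sum subsequence of itself and is therefore minimal. No further comment is needed.
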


The lower bounds in Theorem \ref{th:main_3} (ii) and (iii) (case $m \geq 2$)
now follow from this Corollary and \eqref{taille}.

\subsection{Proof of Theorem \ref{th:main_3} (iii): the lower bound in the case \texorpdfstring{$m=1$}{m = 1}}
\label{aa33}

If $m=1$, the construction will be slightly different but of the same type.
We could have adapted the argument of the preceding subsection.
However, we can be more direct since an explicit description of $\ss_d$ is possible.

We define the $d+1$ elements of $ \llb -1, 1 \rrb^d$
$$
e_1 = (1,1,\dots, 1),\quad e_2 =( -1,1,\dots, 1),\quad e_3 = ( 0,-1,1,\dots, 1),
$$
$$
 \dots, e_d = ( 0,\dots, 0,-1,1),\quad  e_{d+1} = ( 0,\dots, 0,-1).
$$
In other words, for $1\leq k \leq d+1$, the vector $e_k$ has its $\min (k-2,0)$
first coordinates equal to 0, its $\min(k-1,0)$-th equal to $-1$ and
its coordinates from the $k$-th to the $d+1$-th equal to 1.
We consider the sequence
$$
\ss_d = e_1 \cdot e_2 \cdot e_3^2 \cdot e_4^4\cdots e_d^{2^{d-2}} \cdot e_{d+1}^{2^{d-1}}
$$
so that $\ss_d \in \BBc ( \llb -1, 1 \rrb^d )$ and $||\ss_d|| = 2^d$.

It remains to prove that this sequence is minimal. Consider $\tt$ a non-empty zero-sum subsequence of $\ss_d$.
Let $j$ be the minimal index ($1 \leq j \leq d+1$) such that there is at least one element
in the sequence having a non-zero $j$-th coordinate. If $j>1$, then any element in $\tt$ is
one of the $e_k$'s for $k \geq j+1$ but then all the elements of the sequence have
a nonpositive $j$-th coordinate, and at least one has a strictly negative one. Thus $\tt$
cannot be a zero-sum sequence. It follows $j=1$ and $\tt$ must contain either $e_1$ and $e_2$,
and thus both, looking at the first coordinate.

We now prove by induction that, for $k\geq 2$, $\tt$ must contain each $e_k$
with multiplicity $2^{k-2}$. We just proved it for $k=2$. Suppose this is true for some value
of $k<d+1$, then considering the $k+1$-th coordinate of the sum of $\tt$, we obtain that
the multiplicity of $e_{k+1}$ must be equal to
$$
1+1+2+\cdots +2^{k-2}=2^{k-1}.
$$
This completes the induction step and finally the proof that $\tt=\ss_d$.

Thus $\ss_d$ is minimal and, since $||\ss_d|| = 2^d$, the lower bound of Theorem \ref{th:main_3} (iii)
is proved for $m=1$.

\section{Proofs of Theorem \ref{th:main_4} and Theorem \ref{th5}}
\label{der}

We start with the proof of the Theorem \ref{th:main_4}.

\begin{proof}[Proof of Theorem \ref{th:main_4}]
Take a sequence $\ss \in \BBc ( G \times X )$ of length larger than or equal to $\DD(G) \DD (X) +1$.
Since this is larger than $\DD(X)$ we may extract from this sequence
a subsequence $\ss_1$ which sums minimally to zero on the second component.
By definition of an element of $\AAc (X)$, this has a length at most $\DD(X)$.
Removing this subsequence from $\ss$, we get a new sequence $\ss \cdot \ss_1^{-1}$ (we denote in this way
the sequence obtained from $\ss$ after deleting from it the subsequence $\ss_1$) and we have
$$
|| \ss \cdot \ss_1^{-1} || \geq \DD(G) \DD (X) +1 - \DD (X) =  (\DD(G) -1) \DD (X) +1.
$$
While $\ss \cdot \ss_1^{-1}$ does not a priori belong to $\BBc (G \times X)$ ($\ss_1$ may have a
non-zero sum on its first component), this sequence sums to zero on the second component.
We can therefore continue this process and build recursively the sequences $\ss_2,\dots, \ss_l$ such that their projection
on the second component belongs to $\AAc ( X )$. Since $|| \ss_j || \leq \DD(X)$ for
each index $j \geq 1$, the process can continue as long as $l \leq \DD(G)$.
Thus, we can assume that we have built $l=\DD(G)$ distinct subsequences of $\ss$, namely
$\ss_1, \ss_2, \dots, \ss_l$, each summing to zero on the second component.
For each $j \in \llb 1, l \rrb$, we call $g_j \in G$ the sum of the sequence $\ss_j$
on the first component. Notice that  $\ss \cdot \ss_1^{-1}\ss_2^{-1} \dots \ss_l^{-1}$ is non-empty since
$$
|| \ss \cdot \ss_1^{-1}\ss_2^{-1} \dots \ss_l^{-1} || = || \ss || - ( || \ss_1 || + || \ss_2 || + \cdots + || \ss_l|| )
\geq \DD(G) \DD (X) +1 -l \DD (X)=1.
$$
Applying the definition of the Davenport constant of $G$ to the sequence $\tt = g_1 \cdot g_2 \cdots g_l$
(notice that it is a priori not a zero-sum sequence in $G$), we can extract from $\tt$
a subsequence $g_{i_1} \cdot g_{i_2} \cdots g_{i_q}$, for some $q \leq l$ and indices
$1 \leq i_1 < i_2 < \cdots < i_q \leq l$, which sums to $0$ in $G$.
Finally, we consider the subsequence of $\ss$ defined as $\ss' = \ss_{i_1} \cdot \ss_{i_2} \cdots \ss_{i_q}$.
It is a proper subsequence of $\ss$ and we check immediately that
$$
\sum_{x \in \ss'} x = \sum_{j=1}^q \sum_{x \in \ss_{i_j}} x = \sum_{j=1}^q (g_{i_j},0)=0,
$$
which proves that $\ss$ cannot be minimal and, consequently, that $\DD ( G \times X ) \leq  \DD ( G )\ \DD (X )$.
\end{proof}

We finally prove Theorem \ref{th5}.

\begin{proof}[Proof of Theorem \ref{th5}]
Let $n= | G |$ and $g$ be a generator of $G$.

If $m \geq 2$, we use the sequence $\ss_d$ introduced in Section \ref{avtder} (Subsection \ref{subsect1}).
In view of Lemma \ref{multiplicity}, we can write it in the form
$$
\ss_d = u_1^{\alpha_1} \cdot  u_2^{\alpha_2} \cdots  u_{d+1}^{\alpha_{d+1}}
$$
with distinct elements $u_j \in \llb -m,m \rrb^d$ and positive integers $\alpha_j$ (for $1 \leq j \leq d+1$).
We also have
$$
\gcd ( \alpha_1, \alpha_2,\dots, \alpha_{d+1})=1
$$
which implies by B\' ezout's theorem, that we can find integers $w_1,w_2,\dots, w_{d+1}$ such that
\begin{equation}
\label{grrrr}
\alpha_1 w_1 +  \alpha_2 w_2 + \cdots + \alpha_{d+1} w_{d+1} = 1.
\end{equation}
We finally define the sequence
$$
\tt = (w_1 g, u_1)^{n\alpha_1} \cdot (w_2 g, u_2)^{n\alpha_2} \cdots (w_{d+1} g, u_{d+1})^{n\alpha_{d+1}}.
$$
By \eqref{grrrr} and $\ss_d \in \BBc ( \llb -m,m \rrb^d )$, it is immediate to check that
$$
\sum_{x \in \tt} x = \sum_{j=1}^{d+1} \alpha_j n (w_j g, u_j) = \left( ng, n  \sum_{j=1}^{d+1} \alpha_j u_j \right) =(0,0).
$$
Thus $\tt \in \BBc ( G \times \llb -m,m \rrb^d )$.

Let us show that $\tt$ is minimal. Select a non-empty zero-sum subsequence of it, say $\uu$.
By Lemma \ref{ssdj} applied to the second component, which is nothing but $\ss_d^n$, we observe that $\uu$ must be of the form
$$
\uu = (w_1 g, u_1)^{q\alpha_1 } \cdot (w_2 g, u_2)^{q\alpha_2} \cdots (w_{d+1} g, u_{d+1})^{q\alpha_{d+1}}
$$
for some positive integer $q \leq n$.
By summing $\uu$, we get, again by \eqref{grrrr} and $\ss_d \in \BBc ( \llb -m,m \rrb^d )$,
$$
\sum_{x \in \uu} x = \left( q \left(\sum_{j=1}^{d+1} \alpha_j w_j \right) g, q\sum_{j=1}^{d+1} \alpha_j u_j \right)
= (qg,0)
$$
a sum which can be zero only for $q$ a multiple of $|G|=n$, $g$ being a generator. Thus $q=n$. It follows that
$\tt \in \AAc ( G \times  \llb -m,m \rrb^d)$.

The theorem now follows from $|| \tt || = n || \ss_d ||= (2m-1)^d \DD (G)$.

If $m=1$, the same proof applies in an analogous way. This is even simpler since we can take
all but one (namely, $w_1$) of the $w_j$'s equal to zero in view of $\alpha_1=1$.
\end{proof}

\medskip

\section*{Acknowledgments}
The authors are indebted to Alfred Geroldinger for having raised a question that motivated this paper.
The second author is also thankful to Eric Balandraud and Benjamin Girard for useful discussions.

\bigskip

\end{document}